\numberwithin{equation}{section}
\newtheorem{theorem}{Theorem}[section]
\newtheorem{remark}{Remark}
\newtheorem{corollary}{Corollary}[section]
\newcommand{\A}{{\mathbb A}}
\newcommand{\B}{{\mathbb B}}
\newcommand{\M}{{\mathbb M}}
\newcommand\R{\mathbb{R}}
\newcommand{\tr}{{\operatorname{tr}}}
\newcommand{\diag}{{\operatorname{diag}}}
\newcommand{\<}{\langle}
\renewcommand{\>}{\rangle}
\newcommand{\0}{\mathaccent23}
\begin{document}

\markboth{K. Hu AND R. Winther}{Template for Journal Of Computational
Mathematics}

\title{Well-conditioned frames for high order finite element methods}

\author{Kaibo Hu}
\address{School of Mathematics,
University of Minnesota, 55455 Minneapolis, MN, USA}
\email{khu@umn.edu}
\urladdr{http://www-users.math.umn.edu/~khu/}

\author{Ragnar Winther}
\address{Department of Mathematics,
University of Oslo, 0316 Oslo, Norway}
\email{rwinther@math.uio.no}
\urladdr{http://www.mn.uio.no/math/personer/vit/rwinther/index.html}

\maketitle

\begin{abstract}
The purpose of this paper is to discuss representations of high order $C^0$ finite element spaces on simplicial meshes in any dimension. 
When computing with high order piecewise polynomials  the conditioning 
of the basis  is likely to be important. The main result of this paper is a construction
of representations  by frames such that the associated $L^2$ condition number is bounded independently of 
the polynomial degree. To our knowledge, such  a representation has not been presented earlier.
The main tools we will use for the construction is 
the bubble transform, introduced previously in 
\cite{falk2013bubble}, and properties of Jacobi polynomials on simplexes in higher dimensions.
We also include a brief discussion of preconditioned iterative methods for the finite element systems in the setting of representations by frames.
\end{abstract}

\keywords{Key words : high order method, finite element method, condition number, frame}

\section{Introduction}\label{intro}
The discussion in this paper is motivated by finite element  discretizations of
second order elliptic equations, where $C^0$ piecewise polynomial spaces of high polynomial degree
are used as the finite dimensional space.
As the polynomial degree increases the choice of basis can have a substantial effect on the conditioning of the linear systems to be solved. 
The purpose of this paper is to discuss how to obtain representations of the finite element spaces which are uniformly well-conditioned
with respect to the polynomial degree. Here the conditioning of the representation is measured by the $L^2$ condition number.
Furthermore, we will explain how this 
influences the conditioning of the corresponding discrete systems.
 Since our main goal is to discuss dependence with respect to the polynomial degree
we will consider the mesh $\mathcal{T}_h$ to be fixed throughout the discussion below.

To motivate the discussion below, we consider a second order elliptic equation, defined on a bounded domain 
$\Omega \in \R^d$, which admits a weak formulation of the form: 

Find $u \in H^1(\Omega)$ such that
\begin{equation}\label{weak-1}
a(u,v) = f(v), \quad v \in H^1(\Omega),
\end{equation}
where $H^1(\Omega)$ denotes the Sobolev space of all functions in $L^2$ which also have all first order partial derivates in $L^2$. Furthermore, $f$ is a bounded linear functional,
and $a$ is a symmetric, bounded, and coercive bilinear form on $H^1(\Omega)$.
The formulation above reflects that we are considering an elliptic problem with natural boundary condition.
If we instead consider problems with  an essential boundary condition on parts of the boundary, we will obtain  a weak formulation with respect to a corresponding subspace of $H^1(\Omega)$. However, the effect of such modifications of \eqref{weak-1} will have minor effects on the discussion below. Therefore, we will restrict the discussion  to problems of the form \eqref{weak-1} throughout this paper.

A discretization of the problem \eqref{weak-1} can be derived from  a finite dimensional subspace 
$V_h$ of $H^1(\Omega)$. In the finite element method $V_h$ is typically a space of piecewise polynomials 
with respect to a partition, or a mesh, $\mathcal T_h$, with global $C^0$ continuity, and where the mesh parameter $h$ indicates the size of the cells of the partition. 
The corresponding discrete solution is defined by:

Find $u_h \in V_h$ such that
\begin{equation}\label{weak-h-1}
a(u_h,v) = f(v), \quad v \in V_h.
\end{equation}
This system can alternatively be written as a linear system of the form ${\mathcal{A}_h} u_h = f_h$,
where $f_h \in V_h^*$, and where the operator ${\mathcal{A}_h} : V_h \to V_h^*$ is defined by
${\mathcal{A}_h} u ( v ) = a(u,v), $ for all $u,v \in V_h$.
Hence, ${\mathcal{A}_h}$ is symmetric in the sense that for all $u,v \in V_h$,
$\< {\mathcal{A}_h}u,v \> = \< {\mathcal{A}_h}v,u \>,$
where $\< \cdot, \cdot \>$ is the duality pairing between $V_h^*$ and $V_h$.
To turn the discrete system \eqref{weak-h-1} into  a system of linear equations, written in a 
matrix/vector form, we need to introduce a basis $\{\phi_j\}_{j=1}^n$
for the  space $V_h$.
This means that any element
$v \in V_h$ can be written uniquely on the form
$v = \sum_j c_j \phi_j$. We denote the map from $\R^n$ to $V_h$ given by $ c \mapsto v$
for $\tau_h$. In a corresponding manner we define 
$\mu_h : V_h^* \to \R^n$ by $(\mu_hf)_i = \< f ,\phi_i \>$. We note that if $f \in V_h^*$ and $c \in \R^n$ then
\[
\mu_h f \cdot c = \sum_{i=1}^n \< f , \phi_i\>  c_i = \< f, \tau_h(c)\>,
\]
where $\R^n$ is equipped with the standard Euclidean inner product, and where we adopt the standard ``dot notation" for this inner product.
Hence, 
$\mu_{h} : V_h^*\to \R^n$ can be identified as  $\tau_h^*$. If $c$ is the unknown vector, $c= \tau_h^{-1}u_h$,
then the system \eqref{weak-h-1} is equivalent to the linear system 
\begin{equation}\label{lin-system-1}
{{\A}_h} c = \mu_h(f_h) \equiv \tau_h^*(f_h), 
\end{equation}
where ${{\A}_h}$ corresponds to the $n \times n$ matrix representing the operator $\tau_h^* {\mathcal{A}_h} \tau_h : \R^n \to \R^n$. The matrix ${{\A}_h}$ is usually referred to as the stiffness matrix,
and the element $({{\A}_h})_{i,j}$ is given as $a(\phi_i, \phi_j)$. Furthermore, we note that the diagram 
\begin{equation}\label{A-diagram}
\begin{diagram}
\mathbb{R}^{n} & \rTo^{\mathbb{A}_{h}} &\mathbb{R}^{n}\\
       \dTo^{\tau_{h}}       &  & \uTo^{\tau_h^*}\\
 V_{h} & \rTo^{\mathcal{A}_{h}} &V_{h}^{\ast}
\end{diagram}
\end{equation}
commutes.
However, there is a striking difference between the operator ${\mathcal{A}_h} : V_h \to V_h^*$ and its matrix representation ${{\A}_h}$. The stiffness matrix ${{\A}_h}$ depends strongly on the choice of basis,
while the operator ${\mathcal{A}_h}$ only depends on the bilinear form $a$ and the 
space $V_h$. 


For piecewise polynomial spaces of high order the choice of basis can have dramatic effect on the 
conditioning of the stiffness matrix $\A_h$. Therefore, 
there are a number of contributions in the literature discussing how to choose proper bases for 
$C^0$ piecewise polynomial spaces of high order. The purpose of these constructions is usually motivated  by the desire to control specific condition numbers
or to control the sparsity of the  resulting matrices. Let ${\mathcal{I}_h} : V_h^* \to V_h$ be the Riesz map 
given by 
\[
\< {\mathcal{I}_h} f,v \>_{L^2} = \< f,v \>, \quad f \in V_h^*,v \in V_h.
\]
The operator ${\mathcal{I}_h}{\mathcal{A}_h} : V_h \to V_h$ is symmetric and positive definite in the $L^2$
inner product. This operator is also basis independent, and its eigenvalues are given by the generalized eigenvalue problem 
\[
a(u,v) = \lambda \< u,v \>_{L^2}, \quad u,v \in V_h.
\]
If $r$ is the polynomial degree then its spectral condition number, $\kappa({\mathcal{I}_h}{\mathcal{A}_h})$,
generally may grow  like $r^4$, cf. \cite{bernardi1997spectral, hu1998bounds, schwab1998p}. 
Therefore, one possiblity is to design a special basis such that the spectral condition number 
of the stiffness matrix is much smaller than that of the operator ${\mathcal{I}_h}{\mathcal{A}_h}$, i.e.,
\[
\kappa({{\A}_h}) = \kappa(\tau_h^*{\mathcal{A}_h} \tau_h) \ll \kappa({\mathcal{I}_h}{\mathcal{A}_h}).
\]
The obvious constructions which will lead to this is to consider bases which are close to orthonormal with respect to the bilinear form $a$. 
This approach is taken  by  
Schwab in \cite{schwab1998p}, where integrated Legendre polynomials are used
to construct a basis in one space dimension.
However, the generalization of this approach to higher dimensions is not obvious.
Babu\v{s}ka and Szab\'{o} \cite{szabo1991finite} used these basis functions in a tensor product 
setting to obtain bases for 
cubical meshes in higher dimensions.
In \cite{hu1998bounds} it is established that for such bases one has an estimate for the
condition number of the stiffness matrix of the form
$\kappa(\A_{h})\lesssim r^{4(d-1)}$, while  
$\kappa(\M_{h})\lesssim r^{4d}$, where $\M_h$ is the mass matrix given by 
$(\M_h)_{i,j} = \< \phi_i, \phi_j \>_{L^2}$ and 
$d$ is the space dimension.    In particular, these estimates indicate that in  higher dimensions 
it may occur that $\kappa(\A_h)$ is much larger than its basis independent counterpart, $\kappa({\mathcal{I}_h}{\mathcal{A}_h})$.
 Similar constructions on triangular and tetrahedral meshes are based on 
orthogonal polynomials with respect to triangles and tetrahedrons constructed by the 
Duffy transform, i.e., mappings between  simplexes and cubes, cf. \cite{beuchler2012sparsity,fuhrer2015optimal, karniadakis2013spectral,
li2010optimal, zaglmayr2006high}.
In a slightly different direction,
Xin and Cai  \cite{xin2012well}  used multivariate orthogonal polynomials on simplexes
 to design $L^{2}$ hierarchical bases for the discontinuous Galerkin method. 

Our aim in this paper is to construct representations of $C^0$ finite element spaces with $L^2$ condition numbers 
which are bounded independently of the polynomial degree. If the spaces are represented by a basis this quantity
can also be characterized as the spectral condition number of the mass matrix. We will argue below that if
$\kappa(\M_{h})$ is well-behaved then the condition number of the stiffness matrix is basically controlled by 
the basis independent quantity $\kappa(\mathcal{I}_h \mathcal{A}_h)$, cf. \eqref{prod-cond} below.
In fact, we will not restrict the discussion below to bases, but
instead also allow representations of finite element spaces by frames, i.e., generating sets with redundancy.
This more general set up allows us to identify a construction  which will lead to representations with $L^2$ 
condition numbers  which are independent of the polynomial degree $r$. The  key tools we will use 
for this construction include the properties of the bubble transform, cf. \cite{falk2013bubble}.
By combining proper results for the bubble transform with general results for frames based on space decompositions,
the construction of frames with $L^2$ condition numbers bounded independently of the polynomial degree is 
reduced to a pure polynomial problem. More precisely, we need to construct Jacobi polynomials on standard simplexes
in higher dimensions, and these constructions are well known, cf. \cite{dunkl2014orthogonal}.

The rest of this paper will be organised as follows. In Section \ref{sec:notation}, we will present some notation and preliminaries needed for our discussions. 
In particular, we present a simple  bound that relates the spectral condition number of the stiffness matrix and the mass matrix,
and we present some elementary numerical examples to illustrate the sharpness of this bound.
Furthermore, we will recall 
the construction of the bubble transform and its properties.
Section \ref{sec:general-estimates} is devoted to a discussion of frames obtained by a space decomposition, where we 
focus on general estimates for the appropriate condition numbers.
The construction of the specific frames are completed in 
Section \ref{sec:H1basis}, where we explain how to
utilize well-conditioned bases on local subdomains.
We focus on the  construction of $L^2$ orthogonal local bases in Section \ref{sec:H1basis} 
{and present explicit forms of the local bases based on Jacobi polynomials on simplexes. We present numerical experiments based on these local constructions  which give results 
that are consistent with the theory. In particular, the results of the experiments 
indicate that the behavior of the frame condition numbers are robust with respect 
to perturbations of the mesh, even when the mesh is nearly degenerate.} 
Sections \ref{precond-it} is devoted to the discussion of preconditioned Krylov space methods 
for the associated frame systems, which in general will be singular.
In particular, 
we make the observation that under the assumption of standard representations of the discrete elliptic operator and the preconditioner, the conditioning of the preconditioned system is, 
in a proper sense, independent of the choice of basis or frame.
On the other hand, the representation will substantially effect the individual conditioning of the stiffness matrix and the matrix representation of the preconditioner.
However, we will argue that as long as the $L^2$ condition number of the representation stays bounded, then these matrices will roughly behave like their basis independent counterparts.
Finally, some 
concluding remarks are given in \S \ref{sec:concluding}.


\section{Notation and preliminaries}\label{sec:notation}


We assume that $\Omega$ is a bounded polyhedral domain in $\mathbb{R}^{d}$.
We recall the definition of the Sobolev spaces 
   $$
   H^{1}(\Omega):=\{u\in L^{2}(\Omega), \mathrm{grad} u\in L^{2}(\Omega)^{d}\},
   $$ 
   and the corresponding subspace of functions with vanishing trace:
   $$
   \0 H^{1}(\Omega):=\{u\in H^{1}(\Omega): \mathrm{tr}_{\partial \Omega} u=0\}.
   $$
We assume that $\mathcal{T}_{h}$ is a simplicial mesh on  $\Omega$.
If $S$ is a subset of $\R^d$ we let $\mathcal{P}_{r}(S)$ denote the polynomials with degrees less than or equal to $r$ on $S$, while the corresponding space of $C^0$-piecewise polynomials with respect to the mesh $\mathcal{T}_h$ is denoted ${\mathcal{P}_r}(\mathcal{T}_{h})$, i.e., 
$$
{\mathcal{P}_r}(\mathcal{T}_{h}):=\{u \in C^{0}(\Omega): u|_{T} \in {\mathcal{P}_r}(T), ~\forall T\in \mathcal{T}_{h} \}.
$$

\subsection{Representation of discrete operators}\label{disc-oper}
Consider a finite element system of the form  \eqref{lin-system-1}, where the space 
$V_h = {\mathcal{P}_r}(\mathcal{T}_{h})$ for a suitable $r \ge 1$, and let $\{\phi_j \}_{j=1}^n$ be any basis for $V_h$. We recall that the $n \times n$ stiffness matrix $\A_h$ is given as $\tau_h^* \mathcal A_h \tau_{h}$,
where $\mathcal A_h : V_h \to V_h^*$ is the discrete elliptic operator defined by the variational problem
\eqref{weak-h-1}. The corresponding mass matrix is the $n \times n$ matrix with elements $\< \phi_i,\phi_j \>_{L^2}$. Alternatively, $\M_h = \tau_h^* \mathcal I_h^{-1} \tau_h$, where we recall that $\mathcal I_h$ is the Riesz map, mapping $V_h^*$ to $V_h$. The matrices $\A_h$ and $\M_h$ are related by the relation 
\begin{equation}\label{A-rep}
\A_h = \tau_h^* \mathcal I_h^{-1}\mathcal I_h \mathcal A_h \tau_h = \M_h (\tau_h^{-1} \mathcal I_h \mathcal A_h \tau_h).
\end{equation}
We note that the operator $\tau_h^{-1} \mathcal I_h \mathcal A_h \tau_h$ is similar to the basis independent operator $\mathcal I_h \mathcal A_h$. A direct consequence of the identity \eqref{A-rep},
using the characterization of  the extreme eigenvalues by the Raleigh quotient, is the inequality
\begin{equation}\label{prod-cond}
\kappa({{\A}_h}) \le \kappa({\mathcal{I}_h} {\mathcal{A}_h}) \kappa(\M_h).
\end{equation}
where the spectral condition number $\kappa({\mathcal{I}_h} {\mathcal{A}_h})$ is basis 
independent, while 
$\kappa(\M_h)$ is independent of the underlying elliptic operator. In fact, \eqref{prod-cond}
follows from the stronger properties
\[
\lambda_{\max}({{\A}_h}) \le \lambda_{\max}({\mathcal{I}_h} {\mathcal{A}_h}) \lambda_{\max}(\M_h), \quad \text{and } \lambda_{\min}({{\A}_h}) \ge \lambda_{\min}({\mathcal{I}_h} {\mathcal{A}_h}) \lambda_{\min}(\M_h),
\]
where $\lambda_{\min}$ and $\lambda_{\max}$ denote the extreme eigenvalues. To see this
just observe that 
\begin{align*}
\lambda_{\max}({{\A}_h}) &= \sup_{0 \neq c\in \R^n} \frac{a(\tau_hc,\tau_hc)}{c \cdot c}
\le \sup_{0 \neq v \in V_h} \frac{a(v,v)}{\< v,v\>_{L^2} }
\sup_{0 \neq c\in \R^n} \frac{\< \tau_hc, \tau_h c\>_{L^2}}{c \cdot c}\\
&\le \lambda_{\max}({\mathcal{I}_h} {\mathcal{A}_h}) \lambda_{\max}(\M_h),
\end{align*}
and a similar argument establishes the corresponding inequality for $\lambda_{\min}$.

We can therefore conclude that if the basis is chosen such that $\kappa (\M_h)$ is properly bounded, 
then the conditioning of the 
stiffness matrix  $\A_h$ 
is no worse 
than its basis independent analog. 
To illustrate  the effect on the conditioning of the stiffness matrix $\A_h$, by controlling the $L^2$ condition number 
of the basis, we present some simple numerical examples in one space dimension. In other words, we are testing 
the sharpness of the bound \eqref{prod-cond} in the simplest possible setting.

\smallskip
\noindent
{\it Example.}
We consider the Laplace problem in one space dimension, with homogeneous Dirichlet  boundary conditions, i.e., the bilinear form $a$ is given by 
\[
a(u,v) = \int_{-1}^1 u'(x) v'(x) \, dx,
\]
and we use a mesh consisting of one interval. Therefore, the corresponding spaces $V_h$
will be given as $V_h = \0{\mathcal{P}}_{r}(\Omega)$, where $\Omega$ is the interval 
$(-1,1)$ { and $\0{\mathcal{P}}_{r}(\Omega)$ is the space of polynomials of degree on $\Omega$ which vanish on the boundary}. We will investigate the effect of choosing three different bases 
$\{\phi_{1}, \phi_{2}, \cdots, \phi_{r-1}\}$ 
for the spaces $V_h$,
by computing the condition numbers of the mass matrix $\mathbb{M}_{h}$, the stiffmess matrix $\mathbb{A}_{h}$ and  the condition number of the 
basis independent operator $\mathcal{I}_{h}\mathcal{A}_{h}$.
In fact, for any basis the latter is equal to $\kappa(\mathbb{M}_{h}^{-1}\mathbb{A}_{h})$.



Our first test is based on an $L^{2}$ orthonormal basis.  We consider the polynomials 
$(1-x)(1+x)J^{2, 2}_{r}(x), ~r=0, 1, \cdots$, where $J_{r}^{2, 2}(x)$ is the orthonormal Jacobi polynomials  on $[-1, 1]$ with respect to the weight $(1-x)^2(1+x)^2$, i.e.,
$$
\int_{-1}^{1}J^{2, 2}_{s}(x)\cdot J^{2, 2}_{t}(x) (1-x)^{2}(1+x)^{2}dx=\delta_{st},
$$
cf. Appendix A below. 
Since these polynomials form an orthonormal basis for $V_h$ $\kappa(\M_h) = 1$
for all $r$ and $\kappa(\mathbb{M}_{h}^{-1}\mathbb{A}_{h}) = \kappa(\A_h)$.
The logarithms of the latter, for increasing values of $r$,
 are shown in Figure 
\ref{fig:1DinteriorLegendre}, while $\log(\kappa(\mathbb{A}_{h}))$ are compared
to $\log(r)$ in   Figure \ref{fig:interiorlogr}.  The growth depicted here is consistent with the asympotic upper bound, $\kappa(\mathcal{I}_{h}\mathcal{A}_{h}) \lesssim r^{4}$, {cf., \cite{bernardi1997spectral}}.

\begin{figure}
\centering 
{\setlength{\abovecaptionskip}{-30pt}
\begin{minipage}{.5\textwidth}
\includegraphics[width=1.08\linewidth]{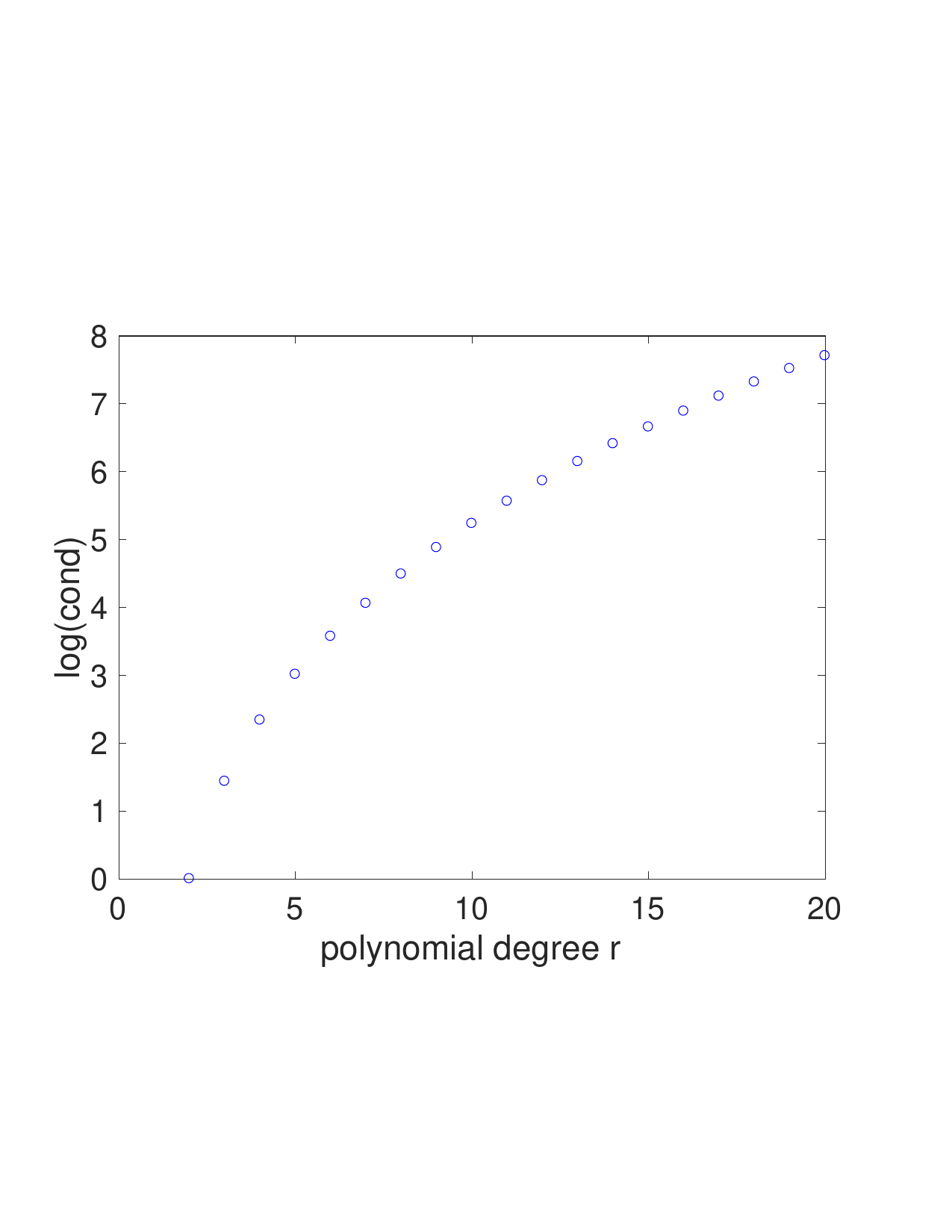}
\caption{The Jacobi basis:  $\log(\kappa(\mathcal{I}_{h}\mathcal{A}_{h})) = \log(\kappa(\mathbb{A}_{h}))$ as functions of $r$.}
\label{fig:1DinteriorLegendre}
\end{minipage}}%
\begin{minipage}{.5\textwidth}{\setlength{\abovecaptionskip}{+2pt}
\includegraphics[width=1.06\linewidth]{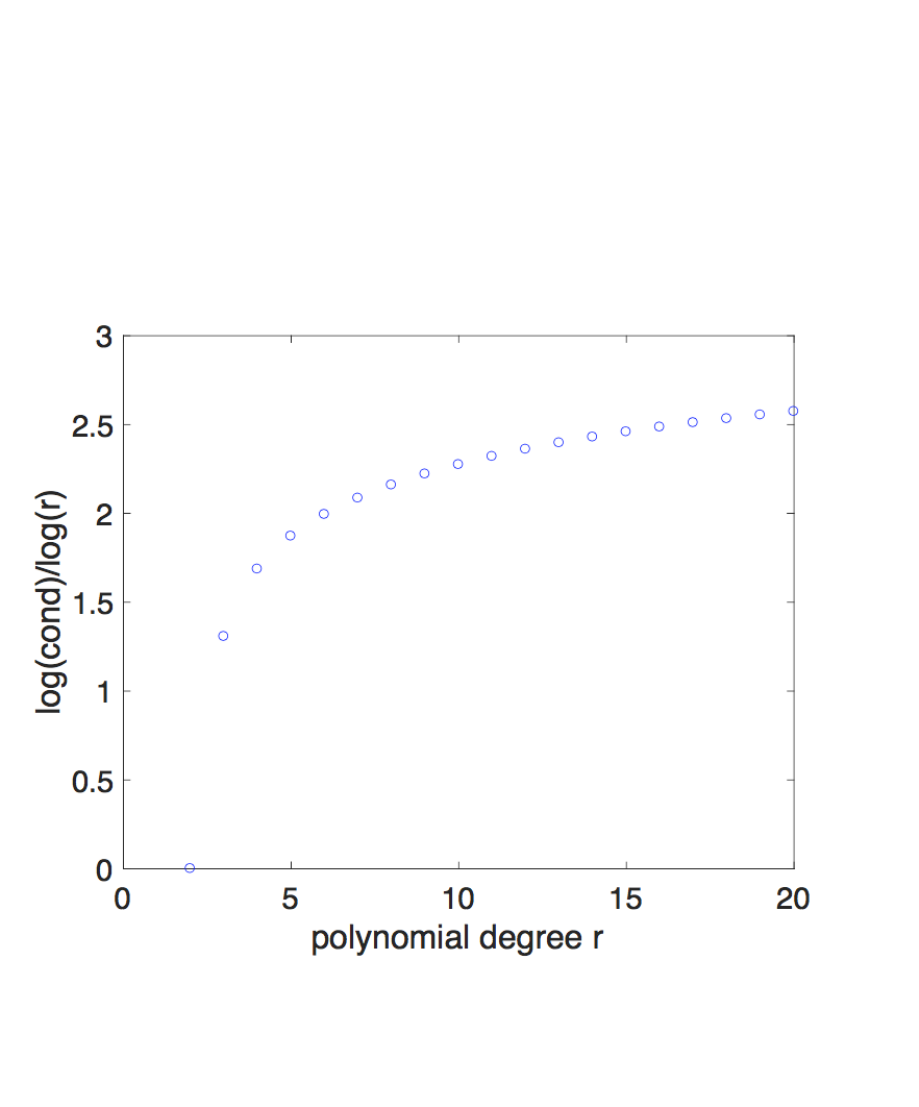}\setlength{\abovecaptionskip}{-5pt}
\caption{$\log(\kappa(\mathcal{I}_{h}\mathcal{A}_{h}))/\log(r)$ as functions of $r$.}
\label{fig:interiorlogr}}
\end{minipage}
\end{figure}

Next we consider the Bernstein basis. More precisely, for any $r \ge 2$ consider the functions 
$b_{s, r}\left (\frac{x+1}{2}\right )$, where 
$
b_{s, r}(x)={r \choose s}x^{s}(1-x)^{r-s}, \quad 1 \leq s \leq r-1.
$
 The condition numbers of the mass and stiffness matrices are shown {in  Figure \ref{fig:interior-bernstein}}, and the results indicate that both 
 $\kappa(\M_h)$ and $\kappa(\A_h)$ grow exponentially with $r$.
 This is consistent with the explicit formula,
 $\kappa(M_h) = \sqrt{2r+1 \choose r}$, which holds in the case of no boundary condidtions
\cite{ciesielskii1985degenerate, lyche2000p}).
Furthermore, we observe that $\kappa(\A_h)$ is several magnitudes larger
than the basis independent quantity $\kappa(\mathcal{I}_{h}\mathcal{A}_{h}).$

\begin{figure}[H]\centering
{\setlength{\abovecaptionskip}{-40pt}
\begin{minipage}{.5\textwidth}
\includegraphics[width=1.08\linewidth]{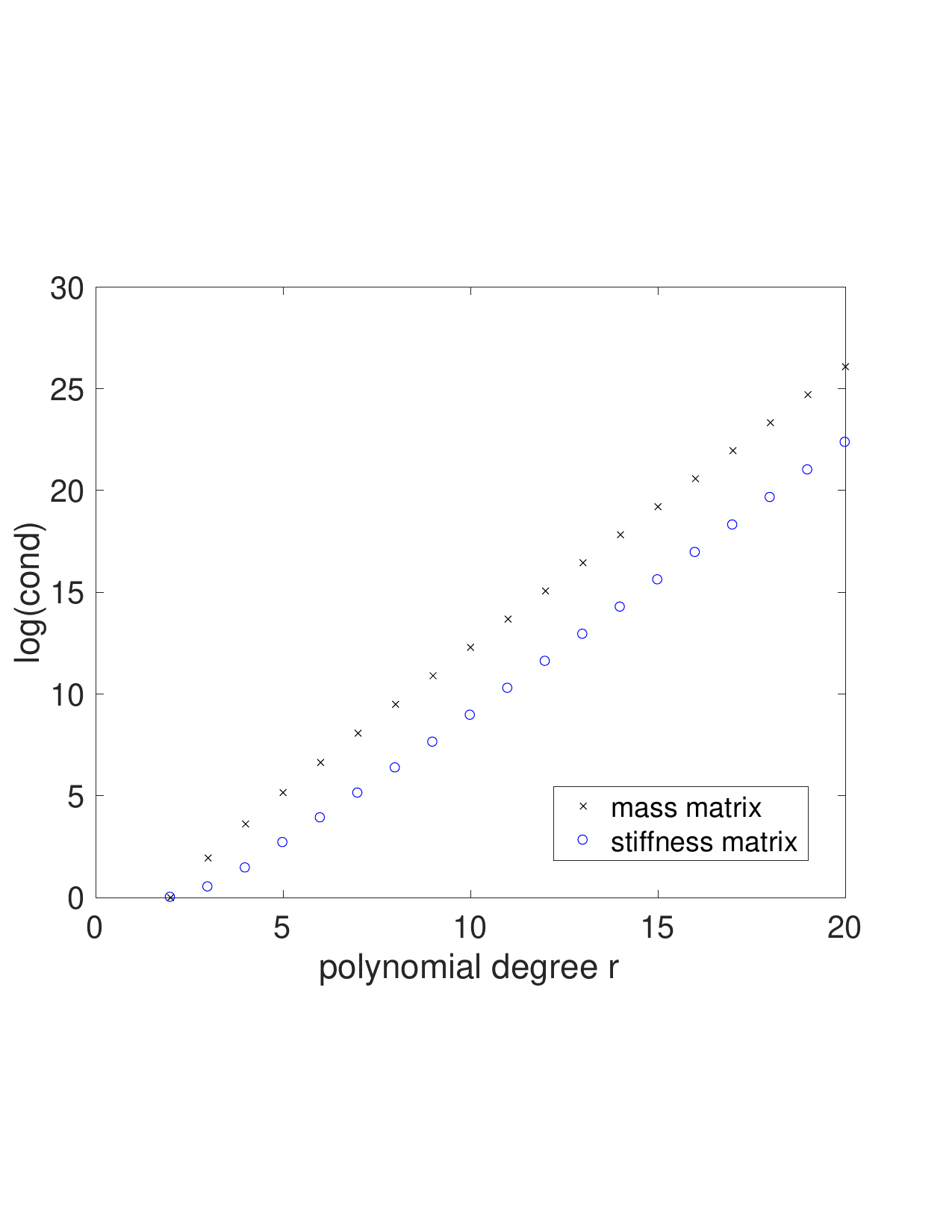}
\caption{The Bernstein basis,  $\log(\kappa(\mathbb{M}_{h}))$ and $\log(\kappa(\mathbb{A}_{h}))$ as functions of $r$.}
\label{fig:interior-bernstein}
\end{minipage}}%
\begin{minipage}{.5\textwidth}\centering{\setlength{\abovecaptionskip}{-28pt} 
\includegraphics[width=1.08\linewidth]{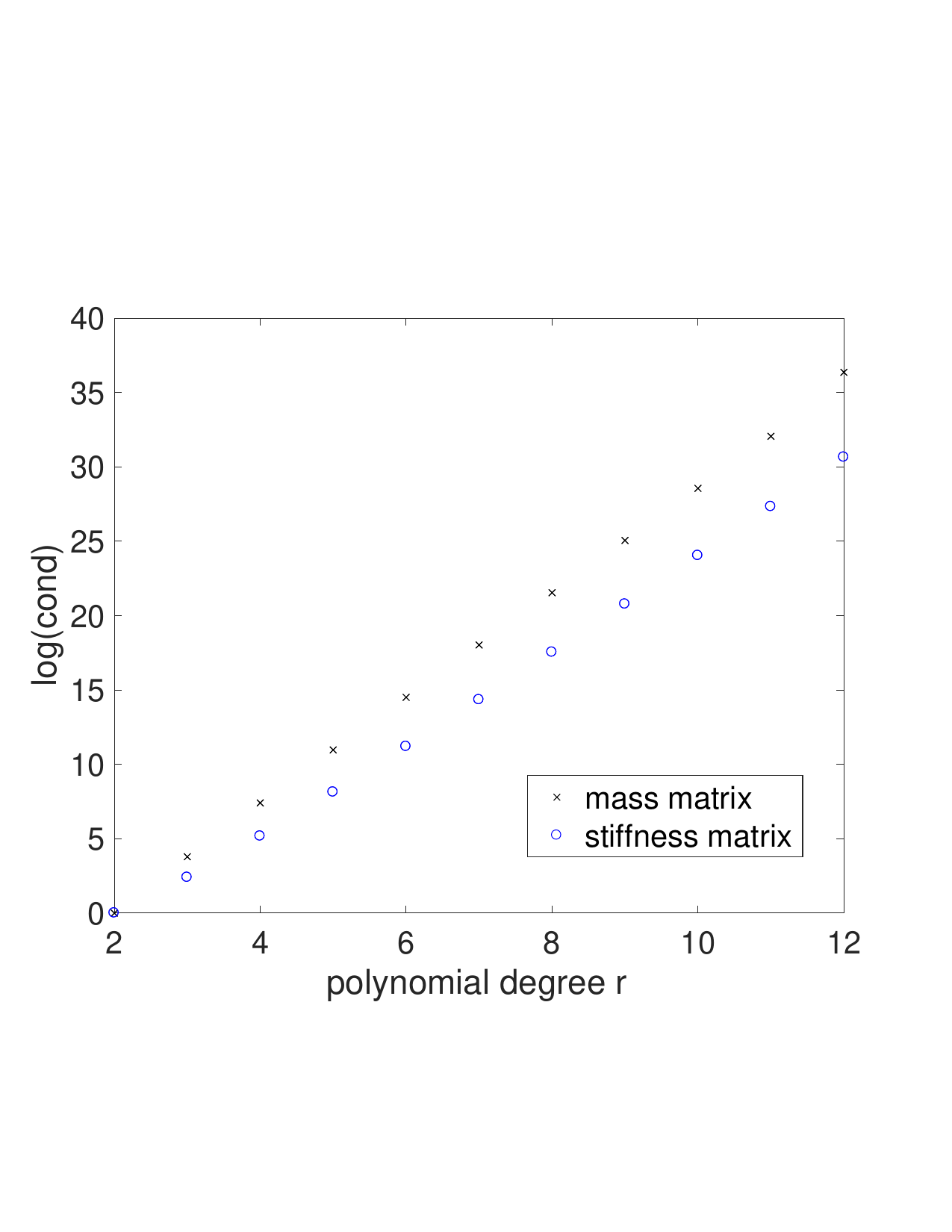}
\caption{ The power basis, $\log(\kappa(\mathbb{M}_{h}))$ and $\log(\kappa(\mathbb{A}_{h}))$ as functions of $r$.}
\label{fig:power}}
\end{minipage}

\end{figure}

Finally, we consider 
the corresponding power basis $(1+x)(1-x)x^{r-2}, ~~r=2, 3, \cdots $. The condition numbers of the mass and stiffness matrices
are in this case even much larger than for the Bernstein basis,  cf. Figure \ref{fig:power}. Due to the extremely bad condition numbers, the computations are only reliable for small values of $r$. 

The experiments just presented 
illustrate an effect of the bound \eqref{prod-cond}. 
For the Jacobi basis the 
condition number 
of the mass matrix is well controlled, and as a consequence the growth
of $\kappa(\A_h)$ is moderate.
On the other hand, for the two other bases $\kappa(\M_h)$ and 
$\kappa(\A_h)$ both grows much faster than $\kappa(\mathcal{I}_{h}\mathcal{A}_{h}).$

\subsection{The local spaces $Q_{f,r}^*$}
The rest of this paper will mostly be devoted to the construction of representations for the spaces 
${\mathcal{P}_r}(\mathcal{T}_{h})$ which admit $L^2$ condition numbers which are independent of the polynomial degree $r$. To obtain such a result we will not restrict the discussion to representations by bases, but 
we will  allow more general representations by frames. In particular, our construction will rely on  
results for the bubble transform derived  in \cite{falk2013bubble}.
To present these results, and to explain how they will be used here, we will first introduce some additional notation. 

If $\mathcal T_h$ is simplicial triangulation of $\Omega$, we let 
$\Delta_{m} = \Delta_{m}(\mathcal T_h) $ be the set of all the  subsimplexes of $\mathcal{T}_h$ of dimension $m$, while $\Delta=\cup_{m=0}^{d} \Delta_{m}$ contains all the subsimplexes.  If $T \in \mathcal T_h$ we let 
$\Delta(T)$ be the set of all subsimplexes of $T$. 
For $f\in \Delta_{m}$, the local patch, or macroelement,  $\Omega_{f}=\cup \{T\in \Delta\, :\, f \in \Delta(T) \}$ is the union of all the elements of the mesh which contains $f$.  Furthermore, $\mathcal{T}_{f, h}$ is the partition $\mathcal{T}_{h}$ restricted to $\Omega_{f}$.

 When $x_{j}\in \Delta_{0}$ is a vertex, we use $\lambda_{j}(x)$ to denote the piecewise linear function which equals one at $x_{j}$, and equals zero at other vertices. From another point of view, $\lambda_{j}$ is the  barycentric  coordinate associated to $x_{j}$, extended by zero outside the macroelement $\Omega_{x_{j}}$.  For  $m <d$ and $f=[x_{0}, x_{1}, \dots, x_{m}]\in \Delta_{m}$, i.e. the convex hull of vertices $x_{0}, x_{1}, \cdots, x_{m}$, we use $\lambda_{f}$ to denote the  vector field $(\lambda_0, \lambda_1, \ldots \lambda_m)$.
Following the approach taken in \cite{falk2013bubble} we will consider $\lambda_{f}$ as a mapping from the domain $\Omega$ to the standard simplex  $S_m^c$  in $\R^{m+1}$ given by 
   $$
   S_{m}^{c}:=\{\lambda=(\lambda_{0}, \cdots, \lambda_{m})\in \mathbb{R}^{m+1}: \sum_{j=0}^{m}\lambda_{j}\leq 1, \lambda_{j}\geq 0, ~\forall~ 0\leq j\leq m\}.
   $$   
We also define $S_m$ to be the face of $S_m^c$ opposite the origin, i.e, 
    $$
    S_{m}:=\{\lambda \in S_m^c : \sum_{j=0}^{m}\lambda_{j}= 1 \},
    $$
    such that $S_{m}^{c}=\left [0, S_{m} \right]$, i.e., $S_m^c$ is the set of all convex combinations of the origin and elements of $S_m$. The mapping $\lambda_{f}$, restricted to $\Omega_f$, is surjective but not injective (see Figure \ref{fig:map} for the case $m=1$). 
    
 If $f = [x_0,x_1, \ldots ,x_m] \in \Delta_m$ then the associated macroelement $\Omega_f$ can be characterized
 as $\Omega_f = \cap_{j=0}^m \Omega_{x_j}$, while the corresponding extended macroelement, $\Omega_f^{e} \supset
 \Omega_f$,  is defined by 
 \[
 \Omega_f^{e} = \bigcup_{j=0}^m \Omega_{x_j}.
 \]
 The pull back of the extended barycentric coordinates, $\lambda_{f}^{\ast}$, given by 
 \[
 \lambda_f^{*} v(x)  =  v(\lambda_f(x)),
 \]
maps functions on $S_{m}^{c}$ to functions on $\Omega$ which are constant and equal to $v(0)$ outside $\Omega_f^{e}$. Furthermore, if $\tr_{\partial S_m^c \setminus S_m} v = 0$ then $\lambda_f^{*}v$ vanishes on the boundary of $\Omega_f$.

The  space of polynomials of degree less than equal to $r$ which vanish on $\partial S_m^c \setminus S_m$
will be denoted $Q_{r}\left (S_{m}^{c}\right )$, i.e., 
\[
Q_{r}\left (S_{m}^{c}\right ):=\left \{v\in \mathcal{P}_{r}\left (S_{m}^{c}\right ): \tr_{\partial S_{m}^{c}\backslash  S_{m}}v=0\right  \}.
\]
By applying the pullback, $\lambda_f^*$,  to this polynomial space we obtain 
\begin{align}\label{Q-fr}
{Q_{f, r}^{\ast}}:=\lambda_{f}^{\ast}\left (Q_{r}\left ( S_{\dim f}^{c}\right ) \right ), \quad m <d.
\end{align}
The elements of this space are polynomials in the variables $\lambda_0(x), \lambda_1(x), \ldots , \lambda_m(x)$,
and they vanish on the boundary of $\Omega_f$. In other words, the space ${Q_{f, r}^{\ast}}$
can be identified with a subspace of $\0 {\mathcal{P}_r}(\mathcal{T}_{f,h})$,  the subspace of 
${\mathcal{P}_r}(\mathcal{T}_{f,h})$ which consists of functions which vanish on $\partial \Omega_f$.
In fact, in the special case when $m=d$, i.e., when  $f \in \Delta_d = \mathcal{T}_{h}$ we define the ${Q_{f, r}^{\ast}}$ to be equal to 
$\0 {\mathcal{P}_r}(f)$. 
{Alternatively, if we define $Q_{r}\left (S_{m}^{c}\right )$
to be $\0 {\mathcal{P}_r}(S_m)$ for $m=d$, then the identification \eqref{Q-fr} also holds in this case.}
The local spaces ${Q_{f, r}^{\ast}}$ 
will act as key building blocks in our construction below.

\begin{figure}[htb]
\setlength{\unitlength}{0.35cm}
\centering
\begin{picture}(30,15)
\linethickness{2pt}
\put(5,0){\line(0,1){10}}
\linethickness{1pt}
\put(5,0){\line(2,1){10}}
\put(5,10){\line(2,-1){10}}
\put(5,0){\line(-3,4){5.15}}
\put(5,10){\line(-3,-2){5.15}}
\put(20,0){\vector(0,1){11}}
\put(20,0){\vector(1,0){11}}
\put(31.5,0){$\lambda_0$}
\put(19.8,11.5){$\lambda_1$}
\linethickness{2pt}
\qbezier(20,10)(25,5)(30,0)
\linethickness{1pt}
\put(7,3.5){$x$}
\put(5.1,5){$f$}
\put(4.8,-.6){$x_0$}
\put(4.8,10.5){$x_1$}
\put(7.5,10){$\Omega_f$}
\put(23,3.5){$\lambda_f(x)$}
\put(22.5,8){$S_1$}
\put(28,10){$S_1^c$}
\qbezier(7,3)(15,9)(23,3)
\put(22.4,3.7){\vector(1,-1){1}}
\end{picture}
\caption{$\lambda_{f}: x \mapsto \lambda_f(x)$ for $f = [x_0,x_1]$ and $d=2$.}
\label{fig:map}
\end{figure}

\subsection{The bubble transform}
The bubble transform 
is a map that depends on the mesh $\mathcal T_h$, but no piecewise polynomial space occurs in the definition. In particular, it does not depend on a degree parameter $r$. In \cite{falk2013bubble}
the construction of the bubble transform was 
partly motivated by the desire to design local projections onto the piecewise polynomial spaces
$\mathcal P_r(\mathcal T_h)$ with proper bounds independent of $r$. In this paper, we will utilize 
the properties of the bubble transform to construct frames for the spaces $\mathcal P_r(\mathcal T_h)$ that admit 
$L^2$ condition numbers which are independent of  the degree parameter $r$.

The bubble transform is a map $\mathfrak B = \mathfrak B_{\mathcal T_h}$ of the form 
$$
\mathfrak{B}: H^{1}(\Omega)\mapsto \prod_{f\in\Delta}\0 H^{1}(\Omega_{f}).
$$
It is a tool to decompose an $H^{1}$ function defined on $\Omega$ into components $B_f u$ with local support in $\Omega_f$. More precisely,
$$
u=\sum_{f\in \Delta}B_{f}u, \quad \forall u\in H^{1}(\Omega),
$$
where
\[
B_{f}: H^{1}(\Omega_f^{e})\mapsto \0 H^{1}(\Omega_{f})
\]
 gives the component of $u$ which is supported on $\Omega_{f}$. In particular, we observe that 
 the operator $B_f$ is local in the sense that $B_fu$ depends on $u|_{\Omega_f^{e}}$.
 Another key property of the map $\mathcal B$ is that it is invariant with respect to the piecewise polynomial spaces $\mathcal P_r(\mathcal T_h)$, i.e., if $u \in \mathcal P_r(\mathcal T_h)$
 then $B_fu \in \0 \mathcal P_r(\mathcal T_{f,h})$.

The bubble transform has a recursive definition.  We briefly recall its construction, but for more details we refer to \cite{falk2013bubble}. For 
$m=0, 1, \dots ,d-1$, $B_fu$ is of the form
\[
B_{f}u:=   \lambda_{f}^{\ast}\circ K_{m}\circ A_{f} u^{m}, \quad\forall f\in \Delta_{m},
\]
where 
\begin{align}\label{def:um}
u^{m}:=\left ( u-\sum_{g\in \Delta_{j}, j<m} B_{g}u\right ),
\end{align}
while $B_T u = u^d|_T$ if $T \in \Delta_d = \mathcal T_h$.
The pull back $\lambda_f^*$, mapping functions on 
$S_m^c$ to functions on $\Omega$, is discussed above.
The operator $A_f$ is an average operator, while $K_m$ is refereed to as a cut-off operator. If $f = [x_0, x_1, \ldots ,x_m]$ then  for any $\lambda \in S_m^c$
\[{
A_fu(\lambda) = |\Omega_{f}|^{-1} \int_{\Omega_f} u(G_m(\lambda,y)) \, dy, }
\]
where $G_m : S_m^c \times \Omega_f \to \Omega_f$ is given by 
\[{
G_m(\lambda,y) = y + \sum_{j=0}^m \lambda_j(x_j - y).}
\]
The operator $A_f$ maps a function $u$ defined on $\Omega_f$ to a function $A_f u$ defined on $S_m^c$, and it is a smoothing operator in the sense that for any $u \in L^2(\Omega)$ the function $A_f u$ will have point  values away from the simplex $S_m$. 
On the other hand,  $\tr_{S_m} A_fu$ corresponds exactly to $\tr_f u$.
Furthermore, the operator $A_f$  has the property
that it maps piecewise polynomials into polynomials. More precisely, if 
$u \in \mathcal P_r(\mathcal T_{f,h})$ then $A_fu \in \mathcal P_r(S_m^c)$.

The cut-off operator $K_m$ maps the set of functions defined on $S_m^c$ to itself.
Its key property is that it preserves the trace on $S_m$, i.e., $\tr_{S_m} K_m v = \tr_{S_m}  v$,
while it kills the trace on the rest of the boundary of $\partial S_m^c$, 
i.e, $\tr_{\partial S_m^c \setminus S_m} K_m v = 0$. In addition, $K_m$ is polynomial preserving in the sense that if $v \in \mathcal P_r(S_m^c)$, with $\tr_{\partial S_m} v = 0$, then $K_m v \in \mathcal P_r(S_m^c)$.

The key properties of the bubble transform are stated in \cite[Section 4]{falk2013bubble}.
For the convenience of the readers, and since most of these properties  are essential for the discussion below, 
we summerize the main results here.
\begin{enumerate}
\item
The construction using barycentric coordinates: For $f \in \Delta_m$ and  $0 \le m <d$
$$
B_{f}u(x)=(\lambda_{f}^{\ast}\circ K_{m}\circ A_{f})u^m(x)= (K_{m}\circ A_{f})u^m(\lambda_{f}(x)),
$$
where $u^m$ is defined by \eqref{def:um}, while $B_fu = u^d|_f$ if $f \in \Delta_d$.
\item
The boundedness in $L^2$ and $H^1$: 
\begin{align}\label{stability-bubble}
\sum_{f\in \Delta}\left \|B_{f}u\right \|_{L^{2}}^{2}\leq b\|u\|^{2}_{L^{2}}, 
\quad \text{and } \sum_{f\in \Delta}\left \|B_{f}u\right \|_{H^1}^{2}\leq b\|u\|^{2}_{H^1}, 
\end{align}
where $b$ is a generic constant not depending on the function $u$. 
\item
Partition of unity: If  $u\in L^{2}(\Omega)$ then 
$
\sum_{f\in \Delta}B_{f}u=u,
$
\item 
Local support:
If $u \in H^1(\Omega)$ then $B_f u \in \0 H^1(\Omega_f)$.
\item
Polynomial preserving: If $u \in {\mathcal{P}_r}(\mathcal{T}_{h})$ then 
$
B_{f}u \in {Q_{f, r}^{\ast}}.
$
\end{enumerate}
We also note that if $u = \sum_{f\in \Delta}u_{f} $, where $\mathrm{supp}(u_{f})\subset \Omega_{f}$
then 
\begin{equation}\label{overlapping-bounds}
a\| u \|_{L^2}^2\le  \sum_{f\in \Delta }\|u_{f}\|_{L^{2}}^{2},
\end{equation}
where the constant $a$ only depends on the number of overlaps of the subdomains $\Omega_f$.
Therefore, by combining this with \eqref{stability-bubble} we obtain that the norms
$\| u \|_{L^2}$ and $(\sum_{f\in \Delta }\|B_{f}u\|_{L^{2}}^{2})^{1/2}$ are equivalent.

The bubble transform suggests a decomposition of the finite element spaces ${\mathcal{P}_r}(\mathcal{T}_{h})$ of the form 
 \begin{align}\label{bubble-decomposition}
{\mathcal{P}_r}(\mathcal{T}_{h})=\sum_{f\in \Delta}{Q_{f, r}^{\ast}}.
 \end{align}

In fact, this decomposition follows directly from the properties above. The spaces $Q_{f,r}^*$ are local
spaces consisting of piecewise polynomials with support on $\Omega_f$. On the other hand,
the sum above is in general not direct. To see this, we observe that that if $y$ is a vertex, i.e., 
$y\in \Delta_{0}$ then 
\begin{align}\label{vertex-space}
Q_{y, r}^{\ast}=\mathrm{span}\left \{\lambda_{y}(x), \lambda_{y}^{2}(x), \cdots, \lambda_{y}^{r}(x)\right \},
\end{align}
where $\lambda_{y}$ is the extended barycentric coordinate of the vertex $y$.
In particular, the function $u(x) = \lambda_y(x) (1 - \lambda_y(x))$ is an element of $Q_{y, r}^{\ast}$
for $r \ge 2$. Let $x_1, x_2, \ldots x_k$ be the other vertices in $\Delta_0(\mathcal T_{f,h})$ with correponding exteded barycentric coordinates $\lambda_1, \lambda_2, \ldots , \lambda_k$.
Then the function $u$ can alternatively be expressed as 
\[
u(x) = \lambda_y(x) \sum_{j=1}^k \lambda_j (x).
\]
Furthermore, $\lambda_y \lambda_j \in Q_{f_j, r}^{\ast}$, where $f_j = [y,x_j] \in \Delta_1$.
We conclude that the function $u$ is both in $Q_{y, r}^{\ast}$ and $\sum_j Q_{f_j, r}^{\ast}$,
and therefore the sum \eqref{bubble-decomposition}
is not direct. Similar redundancies also appear for the spaces $Q_{f, r}^{\ast}$ for simplices of higher dimensions. Therefore, if we want to  utilize the decomposition \eqref{bubble-decomposition},
and bases for the local spaces $Q_{f, r}^{\ast}$, to represent 
the functions in the spaces ${\mathcal{P}_r}(\mathcal{T}_{h})$  we are forced to study representations by frames.

 Interpreting the properties of the bubble transform for the decomposition \eqref{bubble-decomposition} of ${\mathcal{P}_r}(\mathcal{T}_{h})$ the following result is obtained.
 
 \begin{theorem}\label{thm:stable-decomposition}
The decomposition \eqref{bubble-decomposition}
is stable in the sense that there exists $B_{f}:  {\mathcal{P}_r}(\mathcal{T}_{h}) \to {Q_{f, r}^{\ast}}, ~\forall f\in \Delta$, and a positive constant $b$ such that
\begin{align}\label{transform-B}
 \sum_{f\in \Delta}\|B_{f}u\|_{L^2}^{2}\leq b \|u\|_{L^2}^{2}.
\end{align}
Furthermore, as a result of the finite overlapping property of the mesh topology, there exists a positive constant $a$ such that
\begin{align}\label{a-bound-B}
a\left \| \sum_{f\in \Delta}u_{f}  \right \|_{L^2}^{2}\leq \sum_{f\in \Delta}\|u_{f}\|_{L^2}^{2}, \quad \forall  u = \{u_f\} \in {\Pi_{f\in \Delta}Q_{f, r}^{\ast}}.
\end{align}
\end{theorem}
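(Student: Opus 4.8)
The plan is to deduce Theorem \ref{thm:stable-decomposition} directly from the properties of the bubble transform listed above, treating the two inequalities separately.

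For \eqref{transform-B}, I would simply take the operators $B_f$ to be the components of the bubble transform $\mathfrak B_{\mathcal T_h}$ restricted to the finite element space ${\mathcal{P}_r}(\mathcal{T}_{h})$. By the polynomial preserving property (item 5 in the list), for $u \in {\mathcal{P}_r}(\mathcal{T}_{h})$ we have $B_f u \in Q_{f,r}^*$, so these restrictions are well-defined as maps ${\mathcal{P}_r}(\mathcal{T}_{h}) \to Q_{f,r}^*$. Then \eqref{transform-B} is nothing but the $L^2$ half of the stability estimate \eqref{stability-bubble}, with the same constant $b$; combined with the partition of unity property it is exactly the content claimed. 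So this direction is essentially a matter of quoting \cite{falk2013bubble} and observing that the estimate is inherited by the subspace.

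For \eqref{a-bound-B}, the point is the finite overlap of the macroelements. I would argue as in \eqref{overlapping-bounds}: given $u_f$ with $\operatorname{supp}(u_f) \subset \Omega_f$, write $u = \sum_{f\in\Delta} u_f$ and estimate $\|u\|_{L^2}^2$ pointwise. At almost every $x \in \Omega$, only those $f$ with $x \in \Omega_f$ contribute, and the number of such $f$ is bounded by a constant $N$ depending only on the shape-regularity/topology of $\mathcal T_h$ (the maximal number of subsimplices whose macroelement contains a given point). By Cauchy--Schwarz applied to this finite sum at each point, $|u(x)|^2 \le N \sum_{f} |u_f(x)|^2$, and integrating over $\Omega$ gives $\|u\|_{L^2}^2 \le N \sum_f \|u_f\|_{L^2}^2$, i.e. \eqref{a-bound-B} with $a = N^{-1}$. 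This is purely combinatorial/geometric and is already recorded as \eqref{overlapping-bounds}, so it transfers verbatim to the polynomial setting, the only observation being that each $u_f \in Q_{f,r}^*$ indeed has support in $\Omega_f$.

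I do not expect a genuine obstacle here, since both halves are restatements of facts established for the bubble transform in the continuous setting; the only thing to check is that nothing is lost on passing to the finite-dimensional subspace ${\mathcal{P}_r}(\mathcal{T}_{h})$, and the polynomial-preserving properties (items 4 and 5) guarantee exactly that. If anything, the mild subtlety is bookkeeping about $f \in \Delta_d$: for top-dimensional $f$ one uses $B_T u = u^d|_T$ and $Q_{T,r}^* = \0{\mathcal{P}_r}(T)$, and one should note that these interior contributions are trivially disjointly supported (up to measure zero), so they cause no trouble in either estimate. Hence the proof is short: set $B_f := \mathfrak B_{\mathcal T_h}|_{{\mathcal{P}_r}(\mathcal{T}_{h})}$, invoke \eqref{stability-bubble} for \eqref{transform-B}, and invoke the finite-overlap argument \eqref{overlapping-bounds} for \eqref{a-bound-B}.
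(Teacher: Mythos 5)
Your proposal is correct and follows exactly the route the paper intends: the theorem is stated as a direct consequence of the bubble transform's polynomial-preserving property, the $L^2$ bound \eqref{stability-bubble}, and the finite-overlap estimate \eqref{overlapping-bounds}, which is precisely how you argue. The paper gives no further proof beyond citing these properties, so nothing is missing from your account.
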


\section{Estimates of frame condition numbers}\label{sec:general-estimates}
We will utilize the decomposition \eqref{bubble-decomposition} to obtain a well conditioned 
representation of functions in the space ${\mathcal{P}_r}(\mathcal{T}_{h})$.
More precisely, we will combine the decomposition \eqref{bubble-decomposition} with
a basis for each of the spaces $Q_{f, r}^{\ast}$. Due to the  redundancy of the decomposition  
\eqref{bubble-decomposition} this will lead to a spanning set for the functions in 
$ {\mathcal{P}_r}(\mathcal{T}_{h})$ where the elements are not linearly independent, i.e., we obtain 
representations by frames, cf.  \cite{oswald1997frames}. Therefore, we will give a 
brief review of representations by frames. In particular, we will discuss 
frames obtained from space decompositions.

Throughout this section we use $W$ to denote a real, finite dimensional Hilbert space with  inner product $\< \cdot, \cdot\>_{W}$. 
Roughly speaking, a frame is a set of generators which allow redundancy.  
In other words, if $\Phi=\{\phi_{1}, \phi_{2}, \cdots\}$  then each element $u \in W$
can be expressed as a linear combination $u = \sum_{k}c_{k}\phi_{k}$, but this representation is in general not unique.
The condition number of the frame $\Phi$,  ${\mathcal{K}}(\Phi)$,
is defined as 
$${\mathcal{K}}(\Phi)=\beta/\alpha,$$ where 
\begin{equation}\label{lower/upper}
\alpha=\inf_{u\neq 0}\frac{\|u\|_{W}^{2}}{\inf_{u=\sum_{k}c_{k}\phi_{k}}\|c\|_{l^{2}}^{2}}, \quad \beta=\sup_{u\neq 0}\frac{\|u\|_{W}^{2}}{\inf_{u=\sum_{k}c_{k}\phi_{k}}\|c\|_{l^{2}}^{2}}.
\end{equation}
In other words, $\alpha$ and $\beta$ are the optimal constants such that the bounds
\[
\alpha \inf_{c: u=\sum_{k}c_{k}\phi_{k}}\|c\|_{l^{2}}^{2}\leq \|u\|_{W}^{2}\leq \beta \inf_{c: u=\sum_{k}c_{k}\phi_{k}}\|c\|^{2}_{l^{2}}
\]
holds. Therefore, ${\mathcal{K}}(\Phi)$ is the natural concept to relate the norm of $u$
to the norm of its coefficients measured in $l^2$.

\begin{remark}\label{condition-number}
If $\Phi$ is a basis then it is well known that ${\mathcal{K}}(\Phi)$ is equal to the
spectral condition number  of the corresponding ``mass matrix," with elements $\<\phi_i, \phi_j\>_W$.
In fact, the parameters $\alpha$ and $\beta$, given in \eqref{lower/upper},
are exactly the smallest and largest eigenvalue of the mass matrix. In the case of a frame, the mass matrix will in general be singular.
In this case $\beta$ is still the largest eigenvalue of the mass matrix,  while $\alpha$ is the smallest positive eigenvalue.
 \end{remark}

In fact, a similar characterization can be given in the case of frames, cf. Section~\ref{solvebyframe} below.

\subsection{Frames based on space decomposition} 
To give a general description of  frames based on space decomposition, we 
assume that the space $W$ admits a decomposition of the form 
\begin{align}\label{decomposition}
W=\sum_{j=1}^{J}W_{j},
\end{align}
where $W_{j}, j=1,\cdots, J$ are  subspaces of $W$.
The decomposition is not assumed to be direct, but we
assume that there exists a positive constant $a$ such that for any  
$u = \{u_{j}\} \in \prod_{j=1}^J W_{j}$, we have
\begin{align}\label{a-bound}
a\left \| \sum_{j=1}^{J}u_{j}  \right \|_{W}^{2}\leq \sum_{j=1}^{J}\|u_{j}\|_{W}^{2}.
\end{align}
Furthermore, we
assume that there is a positive constant $b$ such that all $u \in W$ admits a decomposition 
$u = \sum_{j=1}^J u_j$, $u_j \in W_j$, where 
\begin{align}\label{transform}
 \sum_{j}\|u_j\|_{W}^{2}\leq b \|u\|_{W}^{2}.
\end{align}
Of course, due to \eqref{stability-bubble} and \eqref{overlapping-bounds},
these bounds  are known to hold 
for the spaces ${\mathcal{P}_r}(\mathcal{T}_{h})$ with $L^2$ inner product.
We will use the decomposition  \eqref{decomposition} to define a frame for $W$. 
More precisely, for each $j$ let $\{\phi_{j, k}\}_{k}$ be a basis for the space  $W_{j}$.
The frame $\Phi$ is then given as 
 $ \{\phi_{j, k}\}_{1\leq j\leq J, 1\leq k\leq N_{j}}$, where $N_{j}$ is the dimension of $W_{j}$. 
For each $j$ we assume that $0 < \alpha_{j} < \beta_{j}$ are the optimal constants such that 
\begin{align}\label{local}
\alpha_{j}\sum_{k}c_{j, k}^{2}\leq \left \|\sum_{k}c_{j, k}\phi_{j, k}\right \|_{W}^{2}\leq \beta_{j}\sum_{k}c_{j, k}^{2}, \quad\forall \{c_{j, k}\}_k.
\end{align}
In other words, 
$
\mathcal{K}_{j}=\beta_{j}/\alpha_{j}
$
is the condition number for the basis $\{\phi_{j, k}\}_{k}$ of $W_j$.

\begin{theorem}\label{thm:estimate1}
Assume that the decomposition \eqref{decomposition} satisfies 
\eqref{a-bound}, \eqref{transform} and \eqref{local}. The frame $\Phi = \{\phi_{j, k}\}_{j, k}$ introduced above satisfies
\[
\mathcal K(\Phi) 
\le a^{-1}b  \left (\max_{1 \le j \le J} {\mathcal K}_j\right ) \max_{1\leq j,l \leq J} \frac{\alpha_{j}}{\alpha_{l}}.
\]
\end{theorem}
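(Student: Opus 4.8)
The plan is to bound the two optimal constants $\alpha$ and $\beta$ in the definition \eqref{lower/upper} of $\mathcal K(\Phi)$ separately, and then take the ratio. Recall that for $u \in W$ and a representation $u = \sum_{j,k} c_{j,k}\phi_{j,k}$, grouping terms as $u_j := \sum_k c_{j,k}\phi_{j,k} \in W_j$ converts the $\ell^2$ norm of the coefficient vector into quantities controlled by the local bounds \eqref{local}. Thus the strategy is: use the upper local bound in \eqref{local} together with \eqref{a-bound} to get a lower bound on $\alpha$, and use the lower local bound in \eqref{local} together with \eqref{transform} applied to the particular stable decomposition to get an upper bound on $\beta$.

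First I would estimate $\beta$. Given $u \in W$, invoke \eqref{transform} to pick a decomposition $u = \sum_j u_j$ with $\sum_j \|u_j\|_W^2 \le b\|u\|_W^2$. For each $j$, write $u_j$ in the basis $\{\phi_{j,k}\}_k$, say $u_j = \sum_k c_{j,k}\phi_{j,k}$; this is possible and the coefficients are unique since each $\{\phi_{j,k}\}_k$ is a basis for $W_j$. The lower inequality in \eqref{local} gives $\sum_k c_{j,k}^2 \le \alpha_j^{-1}\|u_j\|_W^2$. Summing over $j$,
\[
\inf_{u = \sum c_{j,k}\phi_{j,k}} \|c\|_{\ell^2}^2 \le \sum_{j,k} c_{j,k}^2 \le \sum_j \alpha_j^{-1}\|u_j\|_W^2 \le \big(\max_j \alpha_j^{-1}\big)\, b\, \|u\|_W^2 .
\]
Hence $\|u\|_W^2 \ge b^{-1}\big(\min_j \alpha_j\big)\inf_{c}\|c\|_{\ell^2}^2$, so $\beta \le b\,(\min_j \alpha_j)^{-1}$. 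Wait — that is not quite the form wanted; it is cleaner to keep $\beta \le b\max_j\alpha_j^{-1}$ and combine with the $\alpha$ bound below so that the $\max\alpha_j/\alpha_l$ factor emerges from the ratio.

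Next I would estimate $\alpha$ from below. Take any $u \in W$ and any representation $u = \sum_{j,k}c_{j,k}\phi_{j,k}$; set $u_j = \sum_k c_{j,k}\phi_{j,k}$. By \eqref{a-bound}, $\|u\|_W^2 = \|\sum_j u_j\|_W^2 \le a^{-1}\sum_j \|u_j\|_W^2$, and by the upper inequality in \eqref{local}, $\|u_j\|_W^2 \le \beta_j \sum_k c_{j,k}^2 \le (\max_j\beta_j)\sum_k c_{j,k}^2$. Therefore $\|u\|_W^2 \le a^{-1}(\max_j\beta_j)\sum_{j,k}c_{j,k}^2$; taking the infimum over representations gives $\|u\|_W^2 \le a^{-1}(\max_j\beta_j)\inf_c\|c\|_{\ell^2}^2$, i.e. $\alpha \ge a(\max_j\beta_j)^{-1}$. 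Combining,
\[
\mathcal K(\Phi) = \frac{\beta}{\alpha} \le \frac{b\max_j\alpha_j^{-1}}{a(\max_j\beta_j)^{-1}} = a^{-1}b\,\big(\max_j\beta_j\big)\big(\max_j\alpha_j^{-1}\big),
\]
and then I would rewrite $(\max_j\beta_j)(\max_l\alpha_l^{-1})$ as $\max_{j,l}\beta_j/\alpha_l = \max_{j,l}\frac{\beta_j}{\alpha_j}\cdot\frac{\alpha_j}{\alpha_l} \le (\max_j \mathcal K_j)\max_{j,l}\frac{\alpha_j}{\alpha_l}$, which is the claimed estimate.

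I do not expect a serious obstacle here: both halves are essentially two-line applications of the hypotheses, and the only mild subtlety is bookkeeping — making sure the infimum over coefficient representations is handled on the correct side of each inequality (one uses a specific good decomposition for $\beta$, and an arbitrary decomposition for $\alpha$), and that the final algebraic regrouping of $\max_j\beta_j \cdot \max_l\alpha_l^{-1}$ into the stated product of $\max_j\mathcal K_j$ and $\max_{j,l}\alpha_j/\alpha_l$ is done cleanly. No compactness or finite-dimensionality beyond what guarantees the infima are attained is needed, and the constants $a,b$ feed through transparently from Theorem \ref{thm:stable-decomposition}.
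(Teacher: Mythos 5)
Your proof is correct and follows essentially the same route as the paper: establish $\alpha\,\inf_c\|c\|_{\ell^2}^2 \le \|u\|_W^2 \le \beta\,\inf_c\|c\|_{\ell^2}^2$ with constants $b^{-1}\min_j\alpha_j$ and $a^{-1}\max_j\beta_j$, then regroup the ratio into $(\max_j\mathcal K_j)\max_{j,l}\alpha_j/\alpha_l$. The only slip is that you attach each intermediate conclusion to the wrong extremal constant --- the inequality $\|u\|_W^2 \ge b^{-1}(\min_j\alpha_j)\inf_c\|c\|_{\ell^2}^2$, valid for all $u$, is a lower bound on $\alpha$ (not an upper bound on $\beta$), while $\|u\|_W^2 \le a^{-1}(\max_j\beta_j)\inf_c\|c\|_{\ell^2}^2$ is an upper bound on $\beta$ (not a lower bound on $\alpha$) --- but since the two labels are swapped consistently, the final ratio $\beta/\alpha \le a^{-1}b(\max_j\beta_j)(\min_j\alpha_j)^{-1}$ and hence the claimed estimate are unaffected.
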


\begin{proof}
Let $\alpha$ and $\beta$ be the two constants defined by \eqref{lower/upper}.
We will show that 
\begin{equation}\label{bound-alpha-beta}
\alpha \geq b^{-1} \min_{1\leq j \leq J} \alpha_{j}, \quad \text{and } 
\beta\leq a^{-1} \max_{1\leq j\leq J} \beta_{j}.
\end{equation}
From these bounds we immediately obtain 
\[
\mathcal K(\Phi) = \frac{\beta}{\alpha} \le a^{-1}b  \max_{1\leq j,l \leq J} \frac{\beta_{j}}{\alpha_{l}}
\le a^{-1}b  \left (\max_{1 \le j \le J} {\mathcal K}_j\right ) \max_{1\leq j,l \leq J} \frac{\alpha_{j}}{\alpha_{l}},
\]
which is the desired bound. Therefore, it is enough to establish the bounds given by 
\eqref{bound-alpha-beta}.

To show the first inequality, let $u$ be any element in $W$, and $u = \sum_j u_j$
a decomposition of the form \eqref{decomposition} satisfying \eqref{transform}.
Furthermore, let $c = \{c_{j,k} \}$ be the unique coefficients such that $u_j = \sum_k c_{j,k}\phi_{j,k}$. Then 
\[
\|c\|_{l^{2}}^{2} 
\leq \sum_{j=1}^J \alpha_{j}^{-1}\left\|\sum_{k=1}^{N_j}c_{j, k}\phi_{j, k}\right\|_{W}^{2}
\leq \left ( \max_{1\leq j \leq J}\alpha_{j}^{-1}\right ) \sum_{j=1}^{J}\|u_j\|_{W}^{2}\\
\leq b\left ( \max_{1\leq l\leq J}\alpha_{l}^{-1}\right )\|u\|_{W}^{2}.
\]
This implies
\[
\|u\|_{W}^{2}\geq b^{-1}\left (\min_{1\leq j \leq J}\alpha_{j}\right )\inf_{u=\sum_{j, k}c_{j,k}\phi_{j,k}}\|c\|_{l^{2}}^{2},
\]
and the first inequality of \eqref{bound-alpha-beta} follows by taking infimum over all elements of 
$u$ of $W$.

On the other hand, for any coefficient $c=\{c_{j,k}\}$, we define $u_{j}=\sum_{k}c_{j,k}\phi_{j, k}$ and  $u=\sum_{j}u_{j}$.  We then have
\[
a\|u\|_{W}^{2} \leq \sum_{j=1}^J\left\|u_{j}\right\|_{W}^{2}
\leq \sum_{j=1}^J \beta_{j}\sum_{k=1}^{N_j} c_{j,k}^{2} 
\leq \left ( \max_{1\leq j\leq J} \beta_{j}\right )\sum_{j, k}c_{j, k}^{2}.
\]
Therefore,  for any $u \in W$,  we have
$$
\|u\|_{W}^{2} \leq a^{-1}\left ( \max_{1\leq j\leq J} \beta_{j}\right )
\inf_{u=\sum_{j, k}c_{j,k}\phi_{j,k}}\|c\|_{l^{2}}^{2},
$$
and hence the second bound of \eqref{bound-alpha-beta} follows.
\end{proof}

The result above shows that the condition number of the frame $\Phi$ is bounded by the 
constants $a$ and $b$, derived from the decompostion \eqref{decomposition}
and the local condition numbers $\mathcal K_j$. In addition, the factor 
$\max_{j,k} \alpha_{j}/\alpha_{k}$, which we will refer to as a scaling factor, appears.
This factor will be small if all the local condition numbers  $\mathcal K_j$ are small,
and if the local bases $\{ \phi_{j,k} \}_k$ in addition are scaled similarly. In fact, 
the appearance of 
this factor is similar to a well known phenomenon.
Consider a block diagonal matrix of the form  
$$
\left (
\begin{array}{cc}
I_1 & 0\\
0 & \epsilon I_2
\end{array}
\right ),
$$
 where $I_1$ and $I_2$ are identity matrices of proper dimensions, and where $\epsilon>0$
 is a real parameter. Then each block 
  has condition number $1$, while the full matrix has condition number $\epsilon^{-1}$
  due to the different scaling of the two blocks.

\subsection{The bubble decomposition of ${\mathcal{P}_r}(\mathcal{T}_{h})$}\label{bubble-frame}

We end this section by applying Theorem \ref{thm:estimate1}
to the decomposition \eqref{bubble-decomposition} of the spaces 
${\mathcal{P}_r}(\mathcal{T}_{h})$. For each $f \in  \Delta$ let $N_{f}$ denote  the dimension of the space ${Q_{f, r}^{\ast}}$.  We will see below that it is possible to 
construct a basis for each of the spaces ${Q_{f, r}^{\ast}}$ such that 
all are well conditioned in $L^2$,  and with a comparable scaling. Therefore, consider the set up 
when we have a basis for each of the spaces ${Q_{f, r}^{\ast}}$ of the form
$\Phi_f= \phi_{f, 1}, \cdots, \phi_{f, N_{f}}$, satisfying
 \begin{align}\label{local-FEM}
\alpha_{0}\sum_{k}c_{f, k}^{2}\leq \left \|\sum_{k}c_{f, k}\phi_{f, k}\right \|_{L^{2}}^{2}\leq \beta_{0}\sum_{k}c_{f, k}^{2}, \quad\forall f\in \Delta,~ \{c_{f, k}\},
\end{align}
where the positive constants $\alpha_0$ and $\beta_0$ are independent of $f \in \Delta$.
By combining 
Theorem \ref{thm:estimate1} with Theorem \ref{thm:stable-decomposition} we immediately obtain the following.

 \begin{corollary}\label{cor-bubble-frame}
Let $\Phi = \{ \Phi_{f} \}_{f\in \Delta}$ be the frame representation of the space
${\mathcal{P}_r}(\mathcal{T}_{h})$ just introduced, and  satisfying \eqref{local-FEM}.
 We have the estimate:
 \begin{align}\label{ab-bound}
 {
 {\mathcal{K}}(\Phi) \leq a^{-1}b (\alpha_0^{-1}\beta_0),}
 \end{align}
 where $a$ and $b$ are the constants appearing in \eqref{stability-bubble} and 
 \eqref{overlapping-bounds}.
  \end{corollary}
 
 \begin{proof}
 It is a consequence of \eqref{local-FEM} that the condition number of each local basis,
 $\Phi_{f}$ of ${Q_{f, r}^{\ast}}$, is bounded by $\alpha_0^{-1}\beta_0$,
 and that the same bound holds for the scaling factor appearing in Theorem \ref{thm:estimate1}.
 The result therefore follows from this theorem.
 \end{proof}
 
 \begin{remark}\label{orthonormal}
 We note that in the special case  when each of the local bases  $\Phi_f = \phi_{f, 1}, \cdots, \phi_{f, N_{f}}$
 is orthonormal then the bound above reduces to 
 $\mathcal K(\Phi) \leq a^{-1}b$, i.e, the condition number of the frame is bounded entirely by the two constants given in \eqref{stability-bubble} and 
 \eqref{overlapping-bounds}.
\end{remark}
 

\section{Construction of bases for the local spaces}\label{sec:H1basis}
Based on the discussion above, cf. Corollary~\ref{cor-bubble-frame},
 we can conclude that to obtain a well-conditioned 
frame for the spaces ${\mathcal{P}_r}(\mathcal{T}_{h})$, 
it is enough to construct bases for the local spaces 
$Q_{f,r}^*$ which are uniformly well-conditioned in $L^2$. More precisely, it is enough to 
construct bases $\Phi_f$ for  the spaces $Q_{f,r}^*$ such that condition \eqref{local-FEM} holds.
In the special case when $\dim f = d$ then $Q_{f,r}^* = \0 {\mathcal{P}_r}(f)$, and the construction of a basis for this space is well known. We return to this case in the Appendix below. When $\dim f < d$
we recall from Section~\ref{sec:notation} that $Q_{f,r}^*$ is defined by a pull back, with respect to the map 
$\lambda_f : \Omega \to S_m^c$,  of the polynomial space $Q_r(S_m^c)$, where $S_m^c$ is a reference simplex in $\R^{m+1}$
and $f \in \Delta_m(\mathcal{T}_h)$. Therefore, we will construct a basis for the space $Q_{f,r}^*$ by utilizing a basis
for $Q_r(S_m^c)$.

\subsection{Construction of local bases}
Element of the space $Q_r(S_m^c)$ is of the form $\lambda_0 \cdot \lambda_1 \cdots \lambda_m \, p \equiv (\Pi\lambda)_m\, p$,
where $p \in \mathcal{P}_{r-m-1}(S_m^c)$. Therefore, any basis for the space $\mathcal{P}_{r-m-1}(S_m^c)$ leads to a corresponding basis for $Q_r(S_m^c)$. To proceed we recall the fact from \cite[formula (5.6)]{falk2013bubble},
that if $\phi$ is any smooth function on $S_m^c$ then 
\begin{equation}\label{change-of-var}
\int_{\Omega_f} (\lambda_f^* \phi)(x)  \, dx = c_f \int_{S_m^c} \phi(\lambda) b(\lambda)^{d-m-1} \, d\lambda,
\end{equation}
where $b(\lambda) = 1 - \sum_{j=0}^m \lambda_i$, and $c_f$ is a scaling factor depending on the geometry of 
the macroelement $\Omega_f$. In fact,  in the notation of \cite[Section 5]{falk2013bubble} the constant $c_f$
is given by 
\[
c_f = \int_{f^*} J(f,q) \, dq,
\]
where $f^*$ is a piecewise flat manifold of dimension $d - 1 - \dim f$ contained in $\partial \Omega_f$,
and $J(f,q)$ is a piecewise constant function on $f^*$. However, for the discussion here it suffices to observe that 
$c_f = \mathcal{O}(h_f^{d})$, uniformly with respect to a family of shape regular meshes. Here $h_f$ is a local parameter representing the diameter of $T \in \mathcal{T}_h$, i.e. $h_f$  represents the size of the elements contained in $\Omega_f$.
As a consequence, if $u$ and $v$ are orthogonal functions on $S_m^c$, with respect to the weight functions $b(\lambda)^{d-m-1}$,
then $\lambda_f^*u$ and $\lambda_f^*v$ are $L^2$ orthogonal functions on $\Omega_f$. Alternatively, if $p$ and $q$
are elements of $\mathcal{P}_{r-m-1}(S_m^c)$, which are orthogonal with respect to the weight function 
$w_m(\lambda) := (\Pi\lambda)_m^2b(\lambda)^{d-m-1}$, then the corresponding functions $\lambda_f^* [(\Pi\lambda)_mp]$ and 
$\lambda_f^* [ (\Pi\lambda)_mq]$ are $L^2$ orthogonal functions belonging to the space $Q_{f,r}^*$. Furthermore, the norm of $\lambda_f^* [(\Pi\lambda)_mp]$ in $L^2(\Omega_f)$ is equivalent to $h_f^{d/2}$ times the corresponding weighted $L^2$ norm of $p$ on $S_m^c$.
Therefore,  the problem of 
constructing $L^2$ orthogonal  and uniformly scaled bases for the local spaces $Q_{f,r}^*$, is equivalent to the construction of  
bases for the polynomial spaces $\mathcal{P}_{r-m-1}(S_m^c)$, which are orthogonal with respect 
to the weight function $w_m$, and uniformly scaled. {Actually,  since the scaling factor $c_{f}$ only depends on $f$ and is uniform for all the bases associated with $f$, any orthonormal bases on $\mathcal{P}_{r-m-1}(S_m^c)$ with the weight  $w_m$ will transform to bases of $Q_r(S_m^c)$ with condition number one.} 

To construct a polynomial basis, which is orthogonal with respect to a polynomial weight function, corresponds to 
the study of Jacobi polynomials. 
Single variate orthogonal polynomials are of course well studied,
but there are also explicit formulas for Jacobi polynomials with respect to simplexes in higher dimensions. The most popular approach to construct orthogonal polynomials in higher dimensions is to use a transform between simplexes and cubes, referred to as ``the  Duffy transform'' or  ``the Koorwinder method" \cite{koornwinder1975two}.   Furthermore, hierarchical constructions of orthogonal polynomial can be found in \cite{dunkl2014orthogonal}. 

\subsection{Numerical results}
As an example, we present explicit formulas of our frames and explore the condition numbers of the matrices by numerical computation. This will verify our theoretical results and show that the frame is well-conditioned and the constants $a$ and $b$ in \eqref{ab-bound} are bounded on various regular or irregular meshes. We will only consider 
the two dimensional case, i.e., the space dimension $d$ is equal to  $2$.

Let $J^{\alpha, \beta}_{s}(x)$ be the standard single variate Jacobi polynomial of degree $s$ with weight $(\alpha, \beta)$ defined on $[-1, 1]$ (see also Appendix below). Define 
$$
\tilde{J}_{s}^{\alpha, \beta}(\xi):=J^{\alpha, \beta}_{s}(2\xi -1),
$$
 so that $\{\tilde{J}_{s}^{\alpha, \beta}(\xi)\}, s=0, 1,  \cdots$ is a single variate orthogonal basis on $[0, 1]$ with weight $\omega^{\alpha,\beta}:=(1-\xi)^{\alpha}\xi^{\beta}$. 
The explicit formula for our frames are derived from the 
corresponding basis for the spaces $Q_{r}\left (S_{m}^{c}\right )$ for $m=0,1,2$, cf.
\eqref{Q-fr}, where we recall that 
$Q_{r}\left (S_{2}^{c}\right )  =\0 {\mathcal{P}_r}(S_2)$.
Let $\lambda_{0}, \lambda_{1}, \cdots, \lambda_{m}$ be the barycentric coordinates
of $S_m$ extended to $S_{m}^{c}$. The explicit bases for the spaces 
$Q_{r}\left (S_{m}^{c}\right )$
 can be given as follows:
 \begin{align}
 \tilde{J}_{s}^{1, 2}(\lambda_{0})\lambda_{0},\quad s\leq r-1, \label{vertex-1}\\
 \tilde{J}^{0, 2}_{s_{1}}\left (\frac{\lambda_{0}}{1-\lambda_{1}}\right )(1-\lambda_{1})^{s_{1}}\tilde{J}_{s_{0}}^{2s_{1}+3, 2}(\lambda_{1})\lambda_{0}\lambda_{1}, \quad s_{0}+s_{1}\leq r-2, \label{edge1}\\
 \tilde{J}^{2, 2}_{s_{1}}\left (\frac{\lambda_{1}}{1-\lambda_{0}}\right )(1-\lambda_{0})^{s_{1}}\tilde{J}_{s_{0}}^{2s_{1}+5, 2}(\lambda_{0})\lambda_{0}\lambda_{1}\lambda_{2},~ s_{0}+s_{1}\leq r-3,\label{interior}
 \end{align}
 for the three cases $m= 0,1,2$. In fact, these functions 
 are special cases of the simplicial orthogonal polynomials \eqref{nd-orthogonal0}, cf.  the Appendix below. 
The corresponding bases for the spaces $Q_{f, r}^{\ast}$ are given by \eqref{Q-fr}
for $\dim f = m$.
 We note that the functions in \eqref{edge1} and \eqref{interior} do not have rotational symmetry, meaning that exchanging two barycentric coordinates in the formulas will lead to different bases. In the assembling of matrices permuting the variables in  \eqref{edge1} may facilitate the match of orientation. 

To avoid considering effects from inaccurate numerical quadrature, in the computation we use Gauss type formulas on triangles with order 20  \cite{zhang2009set}. Since in the tests below we only consider polynomials of degree less than 10, the numerical quadrature will lead to exact integration.  Due to the roundoff error, the mass matrices of the frames do not have precise zero eigenvalues. Therefore we will use a tolerance $5 e{-11}$
to select nonzero eigenvalues  and thus will consider all numbers near the machine precision as zero.

The bases of $Q_{f, r}^{\ast}$ defined above are $L^{2}$ orthogonal on each triangle
of $\Omega_f$.  Therefore the block of the mass matrix corresponding to contributions
from 
a single macroelement is diagonal on each triangle. This is also observed  in the numerical tests below. However, bases associated to different macroelements can still interfere with each other and the contribution of this overlapping effect to the condition number is reflected in the bound \eqref{ab-bound} with constants $a$ and $b$.

In the numerical tests below we consider the mass matrix 
in the case when the complete mesh is a single vertex macroelement.
Alternatively, this matrix can be  thought of as a "local block matrix" of 
a mass matrix generated by a larger mesh.
On this local mesh we consider all the frame functions generated by 
all the subsimplexes of the mesh.
In all the examples below the local mesh corresponds to the macroelement of 
the origin, and we will consider three cases:
\begin{itemize}
\item Test 1: a macroelement consisting of three triangles with boundary vertices (1, -1), (0, 2), (-1, -1) (left of Figure \ref{fig:m3mesh});
\item Test 2: distortion of Test 1, which has three triangles with boundary vertices (1.3, -0.0001), (0, 2.12), (-1, -0.0001) (right of Figure \ref{fig:m3mesh});
\item Test 3: a macroelement consisting of six triangles with boundary vertices $(2, 0)$, $(1, 1)$, $(0, 1)$, $(-1, 1)$, $(-2, 0)$, $(-1, -1)$, $(1, -1)$  (Figure \ref{fig:m6}). 
\end{itemize}
The diagonal entries of the mass matrix are scaled to one such that the bases are scaled to have unit $L^{2}$ norms. In the tables below, dimension of the frame means the number of functions in the frame representation, while rank of the frame means the rank of the mass matrix. Alternatively, the rank of the frame can be identified as the dimension of 
the space of continuous piecewise polynomial of the same degree on the same mesh.

The condition numbers in Test 1 are shown in Table \ref{tab:m3}. Here $\lambda_{\min}^{\ast}$ denotes the minimal nonzero eigenvalue. Due to the roundoff error, the minimal nonzero eigenvalue slightly decreases as $r$ increases, in contrast to the monotonicity predicted by the min-max principle. Thus the condition numbers also slightly decrease. 
\begin{figure}
\centering 
\includegraphics[width=0.6\linewidth]{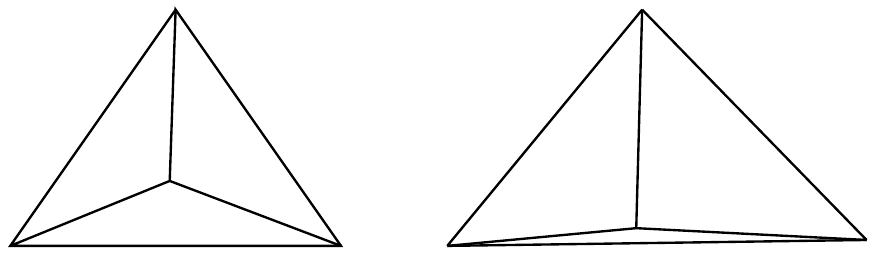}
\caption{Meshes for Test 1 (left) and Test 2 (right). }
\label{fig:m3mesh}
\end{figure}
\begin{table}
\centering
\begin{tabular}{ccccc}\hline
r&3&5&7&9\\\hline
$\lambda_{\max}$&6.623&6.819&6.893& 6.930\\
$\lambda_{\min}^{\ast}$&0.427&0.457&0.472&0.481\\
cond. num. &15.503&14.922&14.600&14.414\\
dim. of frame &33&98&199&336\\
rank of frame &19&46&85&136\\\hline
\end{tabular}
\caption{Results for Test 1. }
\label{tab:m3}
\end{table}
The results on a distorted mesh, Test 2, are shown in Table \ref{tab:m3-distorted}.  The condition numbers are only slightly larger than those in Test 1, showing that the condition numbers are nicely bounded even if there are very thin elements present in the mesh.
\begin{table}
\centering
\begin{tabular}{ccccc}\hline
r&3&5&7&9\\\hline
$\lambda_{\max}$&6.624&6.819&6.893& 6.930\\
$\lambda_{\min}^{\ast}$&0.333&0.383&0.414&0.435\\
cond. num. &19.869&17.809&16.647&15.929\\
dim. of frame &33&98&199&336\\
rank of frame &19&46&85&136\\
\hline
\end{tabular}
\caption{Results for Test 2. }
\label{tab:m3-distorted}
\end{table}

The results for Test 3 are shown in Table \ref{tab:m6}. Similar to Test 1 and 2, the condition number also remains bounded as the degree increases. 
\begin{table}
\centering
\begin{tabular}{ccccc}\hline
r&3&5&7&9\\\hline
$\lambda_{\max}$&6.624&6.819&6.893& 6.930\\
$\lambda_{\min}^{\ast}$&0.348&0.396&0.427&0.446\\
cond. num. &19.049&17.203&16.162&15.525\\
dim. of frame &73&222&455&772\\
rank of frame &43&106&197&316\\
\hline
\end{tabular}
\caption{Results for Test 3. }
\label{tab:m6}
\end{table}



Patterns of the mass matrices, i.e., the zero--nonzero structure, are shown in Figure \ref{fig:m3n3} and  Figure \ref{fig:m3n9} for Test 1.
The pattern only depends on the ordering of the bases and the topology of the mesh. 
Therefore, the results for Test 1 and Test 2 are the same.
In the results below the bases are in the order of the interior vertex, boundary vertices,  boundary edges, interior edges and finally interior modes.
The corresponding results for Test 3 are given in
Figure \ref{fig:m6n3} and Figure \ref{fig:m6n9}. Due  to the locality of the frames, more sparsity appears as the mesh has more triangles.
\begin{figure}
\centering 
\includegraphics[width=0.3\linewidth]{./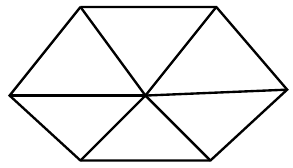}
\caption{Vertex macroelement with 6 triangles. }
\label{fig:m6}
\end{figure}

\begin{figure}[H]\centering
{\setlength{\abovecaptionskip}{-40pt}
\begin{minipage}[t]{.5\textwidth} \vspace{-7pt}
\includegraphics[width=1.08\linewidth]{./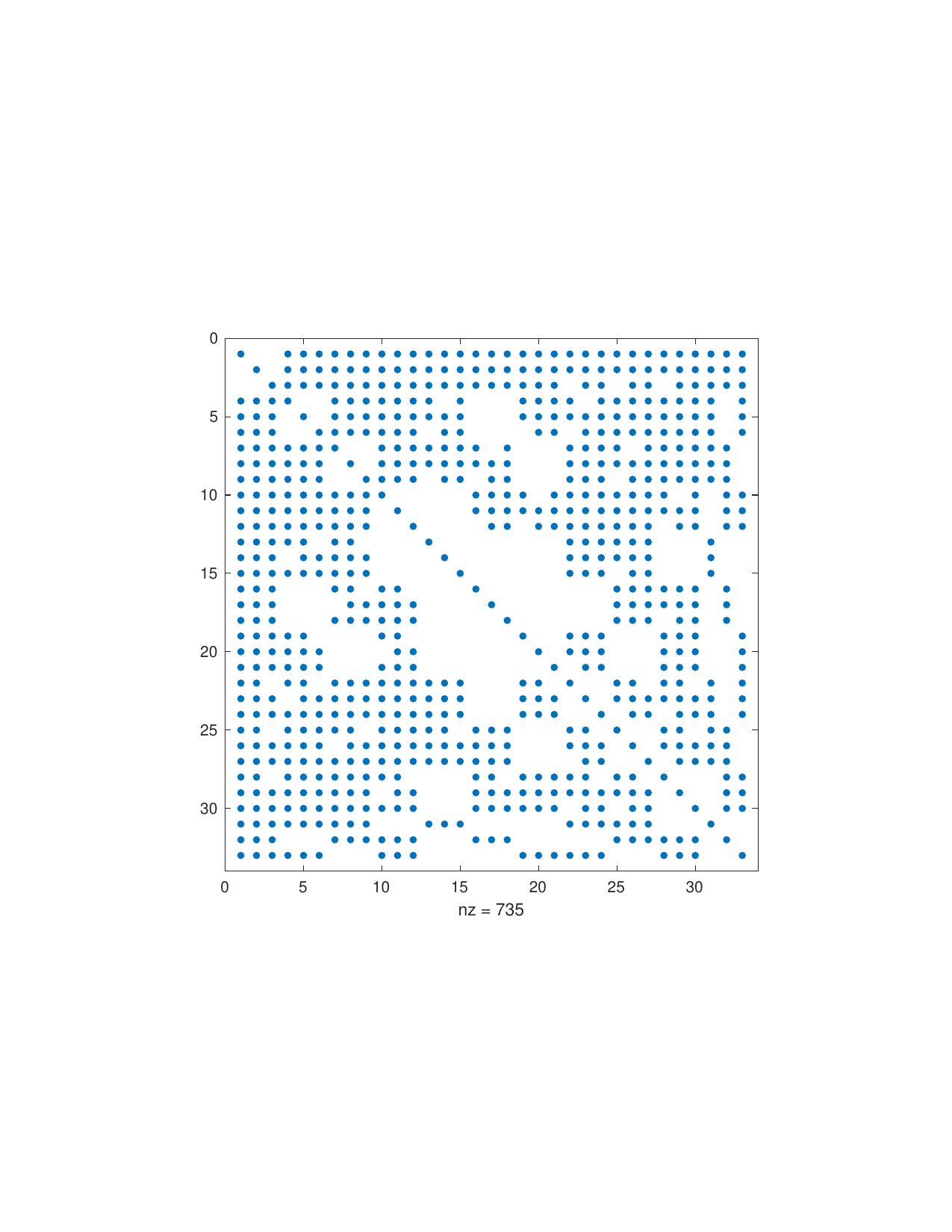}
\caption{Pattern of mass matrix, Test 1, $r=3$.}
\label{fig:m3n3}
\end{minipage}}%
\begin{minipage}[t]{.5\textwidth} \centering \vspace{-10pt}
\includegraphics[width=1.1\linewidth]{./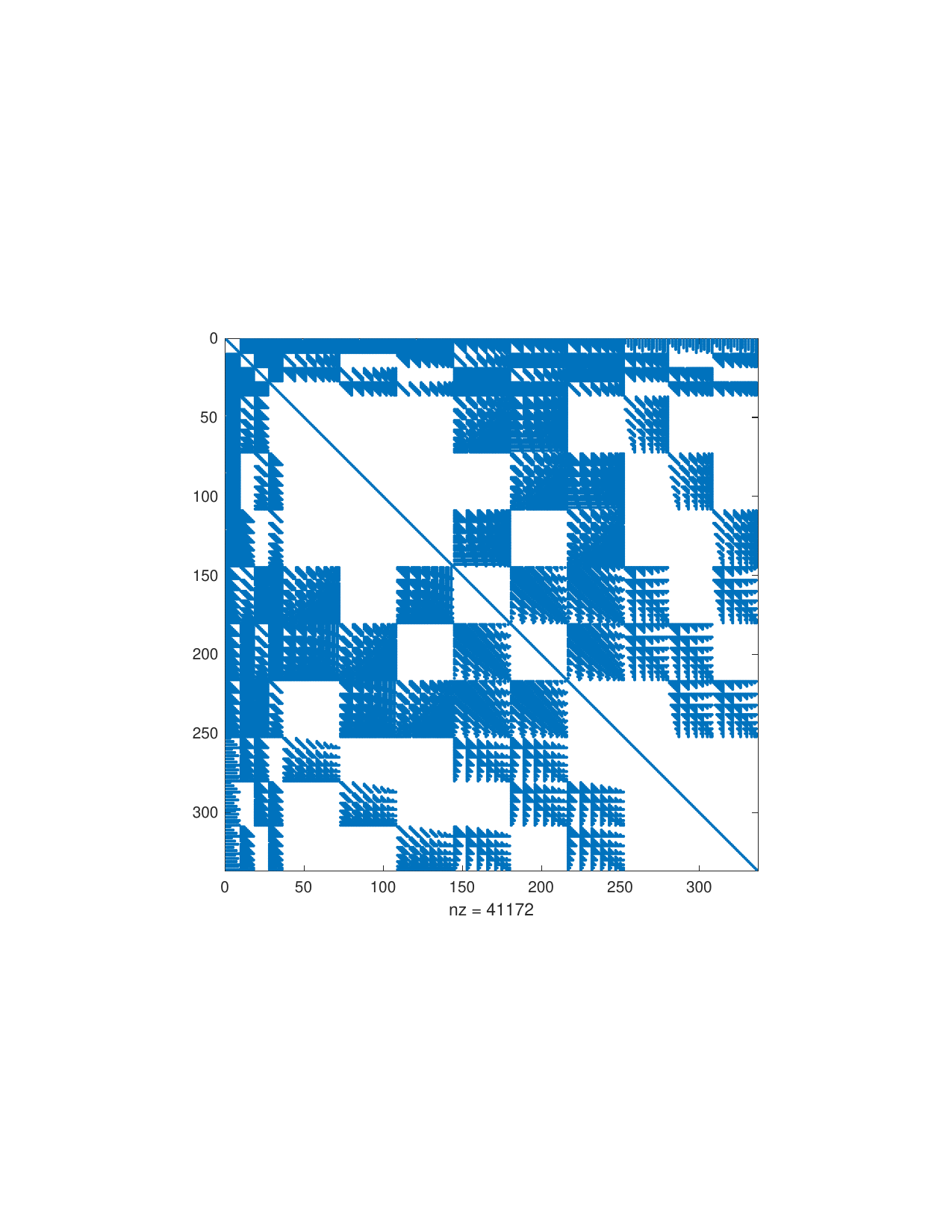}{\setlength{\abovecaptionskip}{-43pt}
\caption{Pattern of mass matrix,  Test 1, $r=9$.}
\label{fig:m3n9}}
\end{minipage}
\end{figure}
\begin{figure}[H]\centering
{\setlength{\abovecaptionskip}{-40pt}
\begin{minipage}[t]{.5\textwidth} \vspace{-7pt}
\includegraphics[width=1.08\linewidth]{./m3n9.pdf}
\caption{Pattern of mass matrix, Test 3,  $r=3$.}
\label{fig:m6n3}
\end{minipage}}%
\begin{minipage}[t]{.5\textwidth} \centering \vspace{-10pt}
\includegraphics[width=1.1\linewidth]{./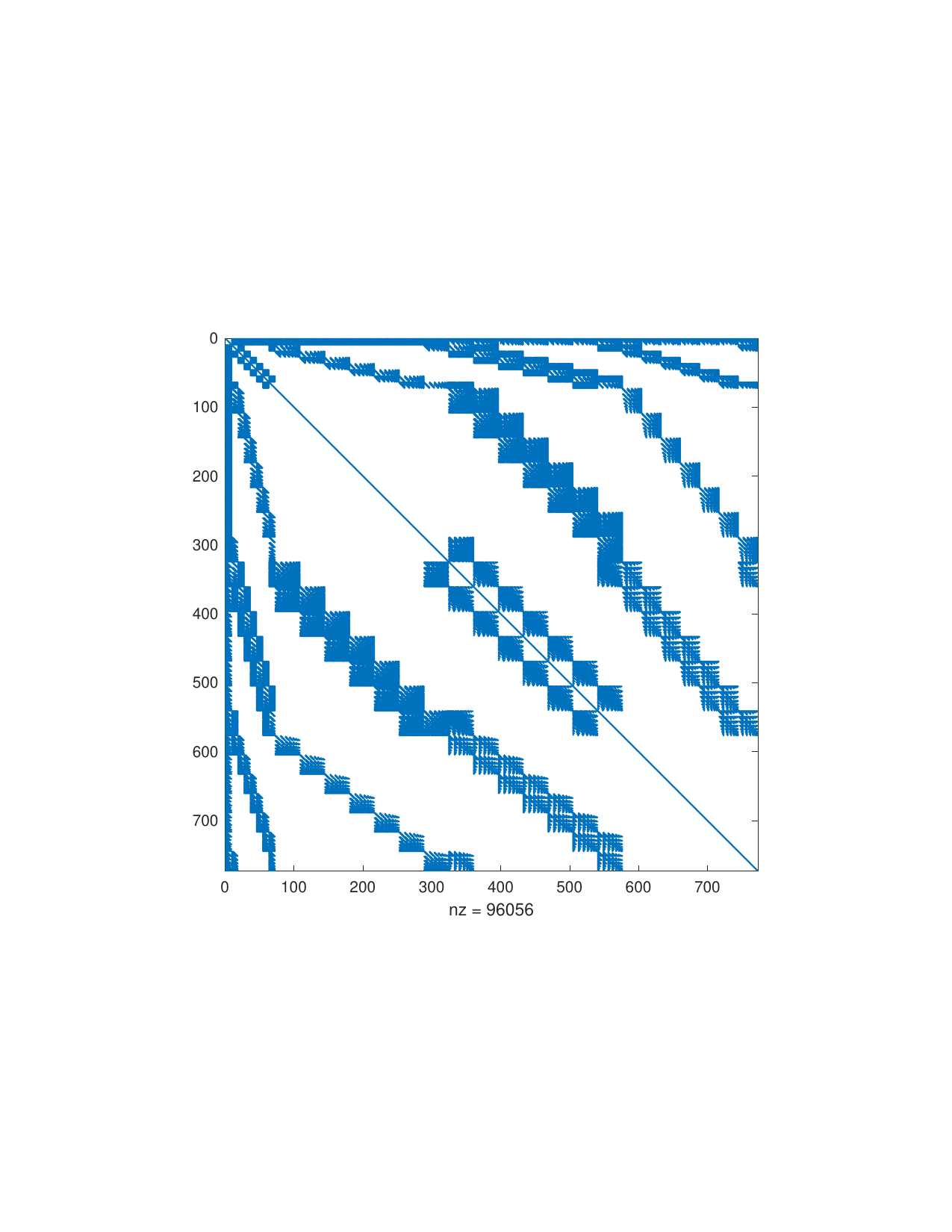}{\setlength{\abovecaptionskip}{-43pt}
\caption{Pattern of mass matrix, Test 3, $r=9$.}
\label{fig:m6n9}}
\end{minipage}
\end{figure}


\section{Iterative methods and preconditioning} \label{precond-it}
We recall the setting discussed in the introduction of this paper, where $V_h \subset H^1(\Omega)$
is a $C^0$ piecewise polynomial  space. 
In the first part of this paper we have outlined how to construct  representations of
the spaces $V_h$ by frames which admit $L^2$ condition numbers which are bounded independently 
of the polynomial degree. The main purpose of this section is to present a brief 
discussion of the use of preconditioned iterative methods in the setting of representations by frames.
In particular, we will clarify how the preconditioned iteration is effected by the boundedness of the 
$L^2$ condition number.
However, before we discuss this in the setting of frames we will first review the more standard  situation when 
the computation relies on a basis for the space $V_h$. 

\subsection{Representation by a basis}
If $\Phi = \{ \phi_j \}_{j=1}^n$ is a basis for the space $V_h$, then the
two bijective maps  
$\tau_{h}: \mathbb{R}^{n}\mapsto V_{h}$ and $\mu_{h}=\tau_{h}^{\ast}: V_{h}^{\ast}\mapsto \mathbb{R}^{n}$,
introduced in Section~\ref{intro},
are used to represent elements of $V_h$ and $V_h^*$, respectively.
The stiffness matrix $\A_h$ admits the representation 
$\mathbb{A}_{h}:=\mu_{h}\mathcal{A}_{h}\tau_{h}$,  where the operator $\mathcal{A}_{h}
: V_h \to V_h^*$ is independent of the choice of basis. Discrete elliptic systems
of the form \eqref{weak-h-1}, or equivalently \eqref{lin-system-1}, are most effectively solved by preconditioned iterative methods.
Therefore,  it seems appropriate to study the effect of the choice of basis on the complete preconditioned system.

In operator form the preconditioned system appears as 
\[
\mathcal{B}_{h} \mathcal{A}_h u_h = \mathcal{B}_{h} f_h,
\]
where the preconditioner $\mathcal{B}_{h}$ is an operator,
$\mathcal{B}_{h}: V_{h}^* \to V_{h}$, which is symmetric and positive definite with respect to the duality pairing between $V_h^*$ and $V_h$. Furthermore, its standard representation is the matrix 
${{\B}_h} = \tau_h^{-1} {\mathcal{B}_h} \mu_h^{-1} = \tau_h^{-1} {\mathcal{B}_h} \tau_h^{-*}$.
The two basic necessary conditions for the construction of an effective  preconditioner are,
{\it i)} the spectral condition number of ${\mathcal{B}_h}{\mathcal{A}_h}$ is well behaved, and {\it ii)}
the matrix-vector products of the form ${{\B}_h}{{\A}_h}c$, for any $c \in \R^n$, can 
be evaluated  fast. Since the stiffness matrix usually is represented by a sparse matrix, the second condition  will hold if the matrix-vector products of the form
${{\B}_h}c$ can be evaluated fast.
It is not the  purpose of this paper to discuss the design of effective preconditioners. 
Instead we refer to \cite{mardal2011preconditioning, xu1992iterative, xu2016algebraic} for general discussions of such constructions.
Our main concern here is to discuss how the $L^2$ condition number of the  basis influences  the key properties of the 
preconditioned iterative method.

The convergence of a standard iterative method for the preconditioned system, such as the conjugate gradient method, will be governed by the spectral condition number of the 
coefficient matrix, 
\begin{equation}\label{similar}
{{\B}_h} {{\A}_h} = \tau_h^{-1} {\mathcal{B}_h} {\mathcal{A}_h} \tau_h. 
\end{equation}
However, this matrix is similar to the operator 
${\mathcal{B}_h}{\mathcal{A}_h}$, and therefore its condition number is independent of the  basis.
On the other hand, the basis will effect the properties of the two matrices
$\A_h$ and $\B_h$. These operators have to be evaluated in each iteration, and their 
conditioning will effect the numerical stability of the computations. 
Recall the inequality \eqref{prod-cond}, which relates $\kappa(\A_h)$, $\kappa(\mathcal I_h \mathcal A_h)$ and the $L^2$ condition number of the basis, $\kappa(\M_h)$.
In fact, $\kappa(\M_h)$ is the only quantity  on the right hand side of the inequality 
\eqref{prod-cond} which is basis dependent. Furthermore, the numerical experiments presented in 
Section~\ref{sec:notation} indicate that this inequality is rather sharp.
Therefore, if the mass matrix is well-conditioned then 
$\kappa(\A_h)$ will behave approximately like the basis independent condition number 
$\kappa(\mathcal I_h \mathcal A_h)$. This condition number reflects the properties of the elliptic operator 
we are approximating. 

The  situation for the preconditioner is similar. The matrix $\B_h$ admits the representation 
\begin{equation}\label{B-rep}
\B_h = \tau_h^{-1} {\mathcal{B}_h} \mathcal I_h^{-1} \mathcal I_h \tau_h^{-*}
= (\tau_h^{-1} {\mathcal{B}_h} \mathcal I_h^{-1} \tau_h) \M_h^{-1},
\end{equation}
and from this we can easily derive  the inequality
\begin{equation}\label{prod-cond-B}
\kappa(\B_h) \le \kappa({\mathcal{B}_h} {\mathcal{I}_h}^{-1}) \kappa(\M_h).
\end{equation}
Therefore, if the mass matrix is well conditioned, then we can conclude that $\kappa(\B_h)$ is essentially bounded by a
basis independent quantity. 
We have therefore seen that, even if the choice of basis has no direct effect on the conditioning of the 
preconditioned system, an $L^2$ well-conditioned basis will result in matrix representations $\A_h$ and $\B_h$ with 
condition numbers that roughly behave like their basis independent counterparts.
Below we will argue that these conclusions also hold if we allow representations 
by frames.

\subsection{Representations by frames}\label{solvebyframe}
Assume that we are given a frame $\Phi = \{\phi_{j} \}_{j=1}^N$ of $V_h$, where in general $N$ is larger than the dimension of $V_{h}$. The operators $\tau_{h}: \mathbb{R}^{N}\mapsto V_{h}$ and $\mu_{h}: V_{h}^{\ast}\mapsto \mathbb{R}^{N}$, are defined as above, i.e., 
\[
c \mapsto \tau_h(c) = \sum_j c_j \phi_j,  \quad \text{and} \quad \mu(f)_i  = \< f ,\phi_i \>,
\]
such that the identity $\mu_h f \cdot c =\< f, \tau_h(c) \>$ holds. In this setting the operator $\tau_h$
is surjective, the operator $\mu_{h}: V_{h}^{\ast}\mapsto \mathbb{R}^{N}$ is injective, and 
as above $\tau_h$ and $\mu_h$ correspond to the dual of the other. 
If the $L^2$ condition number of the frame, $\mathcal{K}(\Phi)$, is controlled then we will argue that 
also in this case the matrix representations of the discrete elliptic operator $\mathcal A_h$ and suitable preconditioners $\mathcal B_h$ behaves roughly like 
the corresponding basis independent counterparts. In fact, this simply follows by proper block decompositions of the matrices.

The stiffness matrix,  representing  the coefficient operator $\mathcal{A}_{h}: V_{h}\mapsto V_{h}^{\ast}$ is still defined as $\mathbb{A}_{h}=\mu_{h}\mathcal{A}_{h}\tau_{h}$, cf. \eqref{A-diagram}.
While the operator $\mathcal A_h$ is positive definite, the stiffness matrix $\mathbb{A}_h$ is only positive 
semi-definite with   $\ker (\mathbb{A}_{h}) =\ker \left (\tau_{h}\right)$ and 
\[
\mathrm{Im}(\A_h) = \mathrm{Im}(\mu_h) = \ker \left (\tau_{h}\right)^{\perp}.
\]
Here the orthogonal complement is with respect to the standard Euclidean inner product of $\R^N$.
In fact, with respect to the orthogonal decomposition $\mathrm{Im}(\mu_h) \oplus \ker \left (\tau_{h}\right)$
 the matrix $\A_h$ has a block structure of the form 
\[
\left (
\begin{array}{cc}
\tilde{\A}_{h} & 0\\
0 &  0
\end{array}
\right ),
\]
where the matrix $\tilde{\A}_{h}$ is a positive definite matrix on $\mathrm{Im}(\mu_h)$.
The mass matrix, $\M_h$,
with elements $\<\phi_i, \phi_j \>$, has a similar block structure of the form 
$$
 \mathbb{M}_{h}=\left ( 
 \begin{array}{cc}
 \tilde{\mathbb{M}}_{h} & 0\\
 0 & 0
 \end{array}
 \right )
 $$
with respect to the decomposition $\mathrm{Im}(\mu_h) \oplus \ker \left (\tau_{h}\right)$. 
Here the 
the matrix $\tilde{\M}_{h}$, mapping  $\mathrm{Im}(\mu_h)$ into itself, is positive definite.
Furthermore, it follows from the observation done in Remark~\ref{condition-number} that  
$\mathcal{K}(\Phi) = \kappa(\tilde{\mathbb{M}}_{h})$. 
The marices $\tilde{\A}_{h} $ and $\tilde{\M}_{h}$ can be related by the identity
\[
\tilde{\A}_{h} = \tilde{\M}_{h} (\tilde \tau_h^{-1} \mathcal I_h \mathcal A_h \tilde \tau_h),
\]
where $\tilde \tau_h$ denotes the restriction of $\tau_h$ to $\mathrm{Im}(\mu_h)$. By arguing as above, this leads to 
\begin{equation}\label{prod-cond-tilde}
\kappa(\tilde \A_h) \le \kappa(\mathcal I_h \mathcal A_h) \kappa(\tilde \M_h),
\end{equation}
which is a generalization of  inequality  \eqref{prod-cond}.
Therefore, we can again conclude that $\kappa(\tilde A_h)$ behaves roughly like its 
basis independent counterpart, $\kappa(\mathcal I_h \mathcal A_h)$, as long as the frame condition number $\mathcal{K}(\Phi) = \kappa(\tilde{\mathbb{M}}_{h})$
is well controlled.

In the setting of frames a preconditioner  is represented by an $N \times N$ matrix 
$\mathbb{B}_{h}$ 
which is symmetric and positive definite with respect to the Euclidean inner product. Furthermore,
the corresponding operator $\mathcal{B}_h : V_h^* \to V_h$ is given by the diagram 
\[
 \begin{diagram}
\mathbb{R}^{N} & \rTo^{\mathbb{B}_{h}} &\mathbb{R}^{N}\\
       \uTo^{\mu_{h}}       &  & \dTo^{\tau_{h}}\\
 V_{h}^{\ast} & \rTo^{\mathcal{B}_{h}} &V_{h}
 \end{diagram}
 \]
i.e., $\mathcal{B}_{h}:=\tau_{h}\mathbb{B}_{h}\mu_{h}$. The operator $\mathcal{B}_{h}$ is symmetric in the sense that 
for $f,g \in V_h^*$,
\[
\< f, \mathcal{B}_h g \> = \mu_h(f) \cdot \B_h \mu_h(g) = \< g, \mathcal{B}_h f \>,
\]
and $\mathcal{B}_h$ is positive definite since $\B_h$ has this property.
Furthermore, the identity 
\begin{align}\label{frame-BA}
\tau_{h}\mathbb{B}_{h}\mathbb{A}_{h}=\mathcal{B}_{h}\mathcal{A}_{h}\tau_{h}
\end{align}
holds.
We will implicitly assume that $\mathcal{B}_h$ 
has an interpretation as an operator from $V_h^*$ to $V_h$ which is independent of the choice of frame.
In this respect, also the operator $\mathcal{B}_h\mathcal{A}_h$, and its spectral condition number, will be 
frame independent. In fact, we will justify this assumption in Section~\ref{rep-pre} below, in the case when the frame is derived 
from a basis of each of  the local spaces  ${Q_{f, r}^{\ast}}$, cf. \eqref{bubble-decomposition}.

Consider a linear system of the form \eqref{weak-h-1}, i.e., $\mathcal{A}_h u = f$, where the data $f \in V_h^*$
and $u \in V_h$. The preconditioned version of this system takes the form 
\begin{align}\label{operator-BA}
\mathcal{B}_{h}\mathcal{A}_{h}u=\mathcal{B}_{h}f,
\end{align}
or in matrix-vector form
 \begin{align}\label{matrix-BA}
\mathbb{B}_{h}\mathbb{A}_{h}c=\mathbb{B}_{h}\mu_h(f),
\end{align}
where $c \in \R^N$ is any vector such that $\tau_h(c) = u$. Hence, even if the solution $u \in V_h$ is uniquely determined by the system, the vector $c$ is only determined up to addition of elements in 
$\ker \left (\tau_{h}\right) = \ker \left (\B_h \A_h \right)$.

It is well known that Krylov subspace methods can be used to solve semidefinite systems,
cf. for example \cite{lee2006convergence}. 
In fact, if we consider a system of the form \eqref{matrix-BA}, with initial guess $c^0 = 0$, then 
the initial residual 
\[
r^0 = \B_h \mu_h(f) \in \mathrm{Im}(\B_{h}\A_{h})= \ker(\B_{h}\A_{h})^{\angle}=\ker(\tau_h)^{\angle},
\]
where the superscript $\angle$ indicates orthogonality with respect to the  inner product generated by the positive definite matrix $\mathbb{B}_{h}^{-1}$.
The matrix $\B_{h}\A_{h}$ maps $\ker(\tau_h)^{\angle}$ to itself, and as consequence all the vectors in the associated Krylov spaces, spanned by vectors 
of the form $(\B_{h}\A_{h})^{k}r^0$, will be in this space.
As a consequence, we can view the iterative method as if it is  restricted to the space $\ker(\tau_h)^{\angle}$, where the system 
\eqref{matrix-BA} has a unique solution. Furthermore, the convergence rate will be bounded by the spectral condition number of 
the coefficient matrix  $\B_{h}\A_{h}$ restricted to this space.
If we let $\hat{\tau}_{h}: \ker(\tau_{h})^{\angle}\mapsto V_{h}$ be the restriction of $\tau_{h}$  to
$\ker(\tau_{h})^{\angle}$ then $\hat{\tau}$ is invertible and from \eqref{frame-BA} we obtain 
\[
{\B}_{h}{\A}_{h}|_{\ker(\tau_{h})^{\angle}} =\hat{\tau}_{h}^{-1}\mathcal{B}_{h}\mathcal{A}_{h}\hat{\tau}_{h}
\]
as a generalization of the similarity relation \eqref{similar}.
In particular, this shows that the spectral condition number of the matrix $\B_h\A_h$, restricted to 
$\ker(\tau_{h})^{\angle}$,
is equal to the spectral condition number of the operator $\mathcal{B}_h \mathcal{A}_h : V_h \to V_h$.
Therefore, also in the case of representations by frames, we can  conclude that the performance of a preconditioned Krylov space method
is, in a proper sense, independent of the choice of representation of the spaces $V_h$ and $V_h^*$.
However, as we have seen above, a well condition frame guarantees that the conditioning  of the stiffness matrix reflects 
the condition number of its basis independent counterpart, cf. \eqref{prod-cond-tilde}.
A similar conclusion for the preconditioner, i.e., a generalization of inequality \eqref{prod-cond-B}, is also easily established.

%
%
%
\subsection{Representation of the preconditioner}\label{rep-pre}
The discussion above is based on the assumption that the matrix $\B_h$, representing the preconditioner 
$\mathcal{B}_h : V_h^* \to V_h$, is positive definite, and that the operator
$\mathcal{B}_h$ has an interpretation which is independent of the representations of the spaces $V_h$ and $V_h^*$.
Here we will argue that if $V_h$ is of the form ${\mathcal{P}_r}(\mathcal{T}_{h})$ and 
the frame $\Phi$  is generated by an overlapping decomposition of the form 
\eqref{bubble-decomposition}, then this assumption is indeed very natural. To illustrate this  
we consider an additive Schwartz preconditioner of the form proposed in \cite{schoberl2008additive}.
In operator form this preconditioner has the structure
\[
\mathcal{B}_h  = \mathcal{B}_h^0 +  \sum_{f \in \Delta} \mathcal{B}_{f,h},
\]
where $\mathcal{B}_h^0$ is a ``coarse space preconditioner",  and $\mathcal{B}_{f,h}$ 
are local preconditioners defined with respect to local spaces ${Q_{f, r}^{\ast}} \subset V_h$.
More precisely, each of the operators $\mathcal{B}_{f,h}$ are preconditioner for the corresponding local operator 
$\mathcal{A}_{f,h}$ defined by the bilinear form $a(\cdot,\cdot)$ with respect to the local space ${Q_{f, r}^{\ast}}$,
while $\mathcal{B}_h^0$ is a corresponding preconditioner defined with respect to the piecewise linear space
${\mathcal{P}_1}(\mathcal{T}_{h}) \subset V_h$.

Consider the set up in Section~\ref{bubble-frame} above, where for each $f \in \Delta = \Delta(\mathcal{T}_h)$
the set $\Phi_f = \{ \phi_{f,k} \}_{k=1}^{N_f}$ is a basis for 
the local space ${Q_{f, r}^{\ast}}$, and $\Phi = \{\Phi_f \}_{f \in \Delta}$.
The 
natural representations of the operators $\mathcal{B}_{f,h}$ are $N_f \times N_f$ matrices of the form $\B_{f,h}= \tau_{f,h}^{-1} \mathcal{B}_{f,h} \mu_{f,h}^{-1}$,
where the representations $\tau_{f,h} $ and $\mu_{f,h}$ are defined as above, but now with respect to the local spaces
 ${Q_{f, r}^{\ast}}$. Furthermore, since $\Phi_f$ is a basis for this space, each of the maps $\tau_{f,h}$ and $\mu_{f,h}$ is invertible,
 and as a consequence, the matrix $\B_{f,h}$ is positive definite.
 The matrix $\B_h$, representing the preconditioner $\mathcal{B}_h$, will now be of the form  
 \[
 \B_h = \B_h^0 + 
 \diag\{\B_{f,h}\}_{f \in \Delta} : \R^N \to \R^N,
 \]
 where $N = \sum_f N_f$. Here $\B_h^0$ is a symmetric and positive semidefinite matrix representing 
 the operator $\mathcal{B}_h^0$, while the block diagonal matrix, $\diag\{\B_{f,h}\}$, is symmetric and positive definite, 
 since each block has this property. We refer to \cite{schoberl2008additive} for more details and tests of numerical performance.
 
 {
 \begin{remark}\label{h-dependence}
 The additional global operator $\mathcal{B}_h^0$ appears in the preconditioner proposed in \cite{schoberl2008additive}
 in order to obtain condition numbers which are independent of the mesh size $h$. 
 In the present paper, where we consider the mesh $\mathcal{T}_h$  to be fixed,
we could have dropped this term and still obtain condition numbers which are bounded uniformly with respect to the polynomial degree $r$.
 \end{remark}
 }

\section{Concluding remarks}\label{sec:concluding}
The purpose of this paper is to discuss how to represent $H^1$ finite element spaces of high degree. More precisely, we study the spaces 
$\mathcal{P}_r(\mathcal{T}_h)$, consisting of $C^0$ piecewise polynomial spaces 
of degree $r$ with respect to a simplicial mesh $\mathcal{T}_h$. 
When the degree $r$ grows, the choice of basis for the spaces 
will intuitively affect the properties of the corresponding linear systems derived from a finite element discretization. 
The construction outlined in this paper, based on properties of the bubble transform and of Jacobi polynomials with respect to 
simplexes, leads to frames with $L^2$ condition numbers which are independent of the polynomial degree. 
In this respect, we have been able to present a procedure to
construct  {\it well-conditioned frames.}
Furthermore, we have shown that such frames leads to stiffness matrices, and matrix representations of 
corresponding preconditioners, with condition numbers that behave roughly like their operator counterparts.

%

A possible disadvantage of a representation by a frame instead of a basis, is that 
the number of unknowns is increasing. For the frames proposed above the following table compares the dimension of the frame with a corresponding standard basis.
 {
 \center \footnotesize
\begin{tabular}{|c|c|c|}
\hline
& Basis & Frame \\\hline
1D & $V+(r-1)E=rV-(r-1)$ & $V+rV+(r-1)E=(2r-1)V-(r-1)$\\
2D & $V+(r-1)E+\left (\frac{1}{2}(r+2)(r+1)-3 \right )F$ & $(r+1)V+\frac{1}{2}r(r-1)E+\left (\frac{1}{2}(r+2)(r+1)-3 \right )F$ \\
3D & $V+(r-1)E+\left (\frac{1}{2}(r+2)(r+1)-3 \right )F $& $(r+1)V+\frac{1}{2}r(r-1)E+\frac{1}{6}r(r-1)(r-2)F$\\
     &  $\quad +\left (\frac{1}{6}(r+3)(r+2)(r+1)-4 \right )T $ &  $\quad +\left (\frac{1}{6}(r+3)(r+2)(r+1)-4 \right )T$\\\hline
\end{tabular}
\\
}
\bigskip
Here $V, E, F$ and $T$ are the number of vertices, edges, faces and 3D cells 
in the triangulation. We observe that even if 
the frame representations have redundancy,  the asymptotic order of the total dimensions remains the same. Of course, in addition to condition numbers there are 
potentially a number of other criteria that could have been used to choose a basis or a frame, for example sparsity and fast evaluation of the stiffness matrix and the preconditioner. However, such discussions are outside the scope of the present paper.

\section*{Appendix. Jacobi polynomials on simplexes}  
The purpose of this appendix is to present precise formulas for basis functions of the local spaces $Q_{f,r}^*$.
We will first consider the case when $\dim f <d$.  We recall from Section~\ref{sec:H1basis} above that if we can construct bases $\{ p_{\bm{s}} \}$ for the polynomial spaces $\mathcal{P}_{r-m-1}(S_m^c)$, which are orthonormal with respect to 
the weighted $L^2$ inner product with weight $w_m(\lambda) = (\Pi\lambda)_m^2b(\lambda)^{d-m-1}$,
then the corresponding set $\{ \lambda_f^* [(\Pi\lambda)_m p_{\bm{s}}] \}$ will be uniformly scaled bases for the spaces $Q^{\ast}_{f, r}$. Therefore, we will briefly outline how such bases for the spaces 
$\mathcal{P}_{r-m-1}(S_m^c)$ can be constructed. This discussion is based on multi-variate Jacobi polynomials on simplexes,
and is mostly taken from 
\cite[Section 5.3]{dunkl2014orthogonal}, cf. also \cite{xin2011well} where the scaling of the polynomials is corrected.

We use $J_{s}^{\alpha,\beta}(x)$ to denote the orthonormal Jacobi polynomial on interval $[-1, 1]$ with weight $w^{\alpha,\beta}:=(1-x)^{\alpha}(1+x)^{\beta}$, i.e.
$$
\int_{-1}^{1}J_{s}^{\alpha, \beta}(x)J_{s'}^{\alpha,\beta}(x)(1-x)^{\alpha}(1+x)^{\beta}dx=\delta_{ss'}.
$$
Using a linear transform $\xi=(x+1)/2$, we get Jacobi polynomials on the reference interval $[0,1]$:
$$
\int_{0}^{1}J_{s}^{\alpha, \beta}(2\xi -1)J_{s'}^{\alpha,\beta}(2\xi -1)(1-\xi)^{\alpha}\xi^{\beta}d\xi=c_{s}^{\alpha,\beta}\delta_{ss'},
$$
where 
\begin{align}\label{cn}
c_{s}^{\alpha,\beta}=2^{-\alpha-\beta-1}.
\end{align}
We introduce the notation
$$
\tilde{J}_{s}^{\alpha, \beta}(\xi):=J^{\alpha, \beta}_{s}(2\xi -1),
$$
so that $\{\tilde{J}_{s}^{\alpha, \beta}(\xi)\}, s=0, 1,  \cdots$ is a single variate orthogonal basis on $[0, 1]$ with weight $w^{\alpha,\beta}$.

We define
$$
b_{j}(\lambda):=1-\sum_{i=0}^{j-1}\lambda_{i}.
$$

Given integer $s\geq 0$ and $0\leq m< d$, we will define a basis for the space $\mathcal{P}_{\bm{s}}\left (S_{m}^{c}\right )$. We consider the  polynomials $J_{\bm{s}}(\lambda)=J_{\bm{s}}(\lambda_{0}, \lambda_{1}, \cdots, \lambda_{m})$ given by
\begin{align}\label{nd-orthogonal0}
J_{\bm{s}}(\lambda):=c_{\bm{s}}^{-1}\prod_{j=0}^{m}b_{j}(\lambda)^{s_{j}}\tilde{J}_{s_{j}}^{a_{j}, 2}\left(  \frac{\lambda_{j}}{b_{j}(\lambda)}\right ),\quad |\bm{s}|\leq s,
\end{align}
where $\bm{s}=(s_{0}, \dots, s_{m})$ is a multi-index. Here the constants $a_{j}$ and $c_{\bm{s}}$ are given by   $a_{j}=2\sum_{i=j+1}^{m}s_{i}+d+2m-3j-1$ and
$$
c_{\bm{s}}^{-2}=\Pi_{j=0}^{m} 2^{a_{j}+3}=\Pi_{j=0}^{m} 2^{2\sum_{i=j+1}^{m}s_{i}+d+2m-3j+2}.
$$
The polynomials \eqref{nd-orthogonal0} form a mutually orthonormal basis for $\mathcal{P}_{s}(S_{m}^{c})$ with the weight $w_{m}$.

For $f\in \Delta_{d}$, the construction is similar and more standard. To construct a basis for $Q_{f, r}^{\ast}=\0{\mathcal{P}}_{r}(f)$,  we can follow \cite{dunkl2014orthogonal}  to construct $L^{2}$ orthonormal bases $\left\{q_{\bm{s}}\right\}$ for $\mathcal{P}_{r-d-1}(S_{d})$ with weight $(\Pi\lambda)_{d}^{2}$. The  set $\{ \lambda_f^* [(\Pi\lambda)_d q_{\bm{s}}] \}$ will be uniformly scaled bases for the spaces $Q^{\ast}_{f, r}$, where $\lambda_{f}: f\mapsto S_{d}$ is defined by the barycentric coordinates.



\bigskip

\noindent {\bf Acknowledgments.}

The authors are grateful to  Douglas N. Arnold, Richard S. Falk and Jinchao Xu for several discussions about the results of this paper.

The research leading to these results has received funding from the  European Research Council under the European Union's Seventh Framework Programme (FP7/2007-2013) / ERC grant agreement 339643. The research of the first author leading to the results of this paper was partly carried out during his affiliation with the University of Oslo, partly supported by China Scholarship Council (CSC), project 201506010013.


\bibliographystyle{plain}       
\bibliography{hpbib}{}   
\end{document}